\def\@tocline#1#2#3#4#5#6#7{\relax
  \ifnum #1>\c@tocdepth 
  \else
    \par \addpenalty\@secpenalty\addvspace{#2}%
    \begingroup \hyphenpenalty\@M
    \@ifempty{#4}{%
      \@tempdima\csname r@tocindent\number#1\endcsname\relax
    }{%
      \@tempdima#4\relax
    }%
    \parindent\z@ \leftskip#3\relax \advance\leftskip\@tempdima\relax
    \rightskip\@pnumwidth plus4em \parfillskip-\@pnumwidth
    #5\leavevmode\hskip-\@tempdima
      \ifcase #1
       \or\or \hskip 1em \or \hskip 2em \else \hskip 3em \fi%
      #6\nobreak\relax
    \hfill\hbox to\@pnumwidth{\@tocpagenum{#7}}\par
    \nobreak
    \endgroup
  \fi}
\renewcommand{\mod}{\ \textrm{mod}\ }
\newcommand{\Z}{\mathbb{Z}}
\newcommand{\m}{\mathfrak{m}}
\DeclareMathOperator{\Res}{Res}
\DeclareMathOperator{\Spec}{Spec}
\DeclareMathOperator{\Coker}{Coker}
\DeclareMathOperator{\Hom}{Hom}
\DeclareMathOperator{\Ker}{Ker}
\DeclareMathOperator{\reg}{reg}
\DeclareMathOperator{\sht}{ht}
\renewcommand{\Im}{\mathrm{Im}}
\theoremstyle{plain}
\newtheorem{theorem}{Theorem}[section]
\newtheorem{thm}[theorem]{Theorem}
\newtheorem{lem}[theorem]{Lemma}
\newtheorem{corollary}[theorem]{Corollary}
\newtheorem{question}[theorem]{Question}
\newtheorem*{claim*}{Claim}
\newtheorem{theoremA}{Theorem}
\theoremstyle{definition}
\newtheorem{definition}[theorem]{Definition}
\newtheorem*{setup*}{Setup}
\newtheorem{convention}[theorem]{Convention}
\theoremstyle{remark}
\newtheorem{remark}[theorem]{Remark}
\newtheorem*{ackn}{Acknowledgements}
\theoremstyle{plain}
\numberwithin{equation}{section}
\Crefname{theorem}{Theorem}{Theorems}
\Crefname{proposition}{Proposition}{Propositions}
\Crefname{lemma}{Lemma}{Lemmas}
\Crefname{corollary}{Corollary}{Corollaries}
\Crefname{conjecture}{Conjecture}{Conjectures}
\Crefname{claim}{Claim}{Claims}
\Crefname{notation}{Notation}{Notations}
\Crefname{remark}{Remark}{Remarks}
\Crefname{example}{Example}{Examples}
\Crefname{definition}{Definition}{Definitions}
\Crefname{theoremA}{Theorem}{Theorems}
\Crefname{question}{Question}{Questions}
\title[Quasi-$F^\infty$-split height versus quasi-$F$-regular height]{Quasi-$F^\infty$-split height versus quasi-$F$-regular height for rational double points and graded rings}
\author{Teppei Takamatsu}
\address{Department of Mathematics, Faculty of Science,
Saitama University,
255 Shimo-Okubo, Sakura-ku,
Saitama-shi, Saitama 338-8570,
Japan}
\email{teppeitakamatsu.math@gmail.com}
\author{Shou Yoshikawa}
\address{Institute of Science Tokyo, Tokyo 152-8551, Japan}
\email{yoshikawa.s.9fe9@m.isct.ac.jp}
\begin{document}

\begin{abstract}
In this paper, we study a phenomenon concerning quasi-$F$-singularities: under suitable hypotheses,
the finiteness of the quasi-$F^\infty$-split height ($\sht^{\infty}$) implies quasi-$F$-regularity, and moreover,
 $\sht^{\infty}$ coincides with the quasi-$F$-regular height ($\sht^{\reg}$).
We establish this coincidence for two important classes of isolated Gorenstein singularities.
First, we explicitly compute $\sht^{\infty}$ and $\sht^{\reg}$ for all rational double points, showing that every non-$F$-pure rational double point satisfies $\sht^\infty = \sht^{\reg}$.
Second, for localizations of graded non-$F$-pure normal Gorenstein rings with $F$-rational punctured spectrum, we again obtain the equality $\sht^\infty = \sht^{\reg}$.
\end{abstract}

\maketitle

\section{Introduction}

The notion of quasi-$F^e$-split height was introduced in \cite{Yobuko19} and \cite{TWY24}.
In \cite{TWY24}, it was shown that for a non-ordinary Calabi–Yau variety, the quasi-$F^e$-split height is strictly increasing in $e$, and in particular its limit, the quasi-$F^\infty$-split height, is infinite.
Motivated by this phenomenon, it is natural to ask when the quasi-$F^\infty$-split height of a singularity is finite, and what this finiteness should imply.

To formulate this question precisely, we recall the definition of the quasi-$F^e$-split height and the quasi-$F$-regular height.

\begin{definition}[cf.~\cite{TWY24}*{Lemma~3.10}]
Let $(R,\m)$ be a Gorenstein $F$-finite local ring of dimension $d$.
\begin{itemize}
\item For integers $n,e \geq 1$ and an element $c \in R$, we define homomorphisms $\Phi^e_{R,n}$ and $\Phi^{e,c}_{R,n}$ and $W_n(R)$-modules $Q^e_{R,n}$ and $Q^{e,c}_{R,n}$ by the following diagram in which every square is a pushout:
\begin{equation*}
\begin{tikzcd}
W_n(R) \arrow[r,"F^e"] \arrow[d,"\Res"'] &
F_*^e W_n(R) \arrow[d] \arrow[r,"{\cdot [c]}"] &
F_*^e W_n(R) \arrow[d] \\
R \arrow[r,"\Phi^{e}_{R,n}"] \arrow[rr,bend right, "\Phi^{e,c}_{R,n}"'] &
Q^e_{R,n} \arrow[r] & Q^{e,c}_{R,n}.
\end{tikzcd}
\end{equation*}
\item For integers $n,e \geq 1$, we say that $R$ is $n$-quasi-$F^e$-split if the homomorphism
\[
\Phi_{R,n}^e \colon H^d_\m(R) \to H^d_{W_n(\m)}(Q^e_{R,n})
\]
is injective.
Furthermore, we define the quasi-$F^e$-split height $\sht^e(R)$ of $R$ by
\[
\sht^e(R):=\inf\{n \geq 1 \mid \text{$R$ is $n$-quasi-$F^e$-split}\}
\]
if $R$ is $n'$-quasi-$F^e$-split for some $n'$, and $\sht^e(R):=\infty$ otherwise.
We simply denote $\sht^1$ by $\sht$.
Moreover, we define the quasi-$F^\infty$-split height by
\[
\sht^{\infty}(R) := \sup\{ \sht^e(R) \mid e \in \Z_{\geq 1} \} \in \Z_{>0} \sqcup \{\infty\}.
\]
\item For an integer $n \geq 1$, we say that $R$ is $n$-quasi-$F$-regular if for every $c \in R^\circ$ there exists an integer $e \geq 1$ such that the homomorphism
\[
\Phi^{e,c}_{R,n} \colon H^d_\m(R) \to H^d_{W_n(\m)}(Q^{e,c}_{R,n})
\]
is injective.
Furthermore, we define the quasi-$F$-regular height $\sht^{\reg}(R)$ of $R$ by
\[
\sht^{\reg}(R):=\inf\{n \geq 1 \mid \text{$R$ is $n$-quasi-$F$-regular}\}
\]
if $R$ is $n'$-quasi-$F$-regular for some $n'$, and $\sht^{\reg}(R):=\infty$ otherwise.
\end{itemize}
We note that we have
\[
1 \leq \sht(R) \leq \sht^2(R) \leq \cdots \leq \sht^{\infty}(R) \leq \sht^{\reg}(R) \leq \infty. 
\]
\end{definition}

The following natural question asks whether finiteness of the quasi-$F^{\infty}$-split height forces quasi-$F$-regularity.

\begin{question}\label{question}
Let $(R,\m)$ be a Gorenstein local ring of characteristic $p>0$, and assume that $R$ has an isolated singularity and is not $F$-pure.
If the quasi-$F^\infty$-split height of $R$ is finite, is $R$ quasi-$F$-regular?
Furthermore, do we have
\[
\sht^\infty(R)=\sht^{\reg}(R)?
\]
\end{question}

If we drop the assumption of non-$F$-purity, then the answer to this question is negative.
Indeed, if $E$ is an ordinary elliptic curve, then its affine cone $R$ is $F$-split but not quasi-$F$-regular (cf.\cite{KTTWYY3}*{Theorem~A}).
Thus $\sht^{\reg}(R)=\infty$ while $\sht^\infty(R)=1$.
On the other hand, for a supersingular elliptic curve $E$, it is known by \cite{TWY24}*{Theorem~7.1} that the affine cone satisfies that the quasi-$F^\infty$-height is infinite.


The goal of this paper is to study \Cref{question} for two important classes of singularities.
Our first main result provides a complete and explicit answer for rational double points (RDPs).
It is known by \cite{HaraRDP} that every taut RDP is strongly $F$-regular, thus $\sht^{\reg}=1$.
Furthermore, \cite{KTTWYY3}*{Theorem~C} shows that every RDP is quasi-$F$-regular in all characteristics.
Consequently, their quasi-$F$-regular height is always finite, and the only nontrivial part of \Cref{question} for RDPs is whether the equality $\sht^{\infty}=\sht^{\reg}$ holds.

The following theorem computes all quasi-$F^e$-split heights, the quasi-$F^{\infty}$-split height, and the quasi-$F$-regular height for every RDP in positive characteristic. 
In the computation, we use the Fedder-type criterion for quasi-$F^e$-splitting and quasi-$F$-regularity established in \cite{Yoshikawa25-fedder}.
Note that quasi-$F$-split heights of non-taut RDPs were computed in \cite{kty2}*{Table~1}.

\begin{theoremA}[\Cref{thm:qFeht}]\label{intro:thm:qFeht}
We completely determine the quasi-$F^e$-split heights and the quasi-$F$-regular heights
for non-taut RDPs as follows.
\begin{enumerate}
\item
For types other than type $D$ in characteristic $2$, the heights are given in
Table~\ref{table:RDPs}.
Moreover, for every non-$F$-pure RDP, we have
\[
\sht^e = \sht^{\infty} \qquad (e \geq 2).
\]

\item
For types $D_{2n}^0$ and $D_{2n+1}^0$, we have
\[
\sht^e = \sht^{\infty} = \sht^{\reg} = \lceil \log_2 n \rceil +1 \qquad (e \geq 1).
\]

\item
For types $D^r_{2n}$ and $D^r_{2n+1}$ \textup{($r=1,\ldots,n-1$)} in characteristic $p=2$,
the following statements hold.
\begin{itemize}
\item
We have
\[
\sht = \lceil \log_2 (n-r) \rceil +1.
\]

\item
Suppose that $n-r=1$.
Then
\[
\sht^e = \sht^{\infty} = \sht^{\mathrm{reg}} = 1 \qquad (e\geq 1).
\]

\item
Suppose that $n-r>1$ is a power of $2$.
Then
\[
\sht^e = \sht^{\infty} = \sht^{\mathrm{reg}}
= \log_2 (n-r) + 2 \qquad (e \geq 2).
\]

\item
Suppose that $n-r>1$ is not a power of $2$.
Define
\[
e_0 :=
\min
\Bigl\{
e \in \Z_{\geq 2}
\ \Big|\ 
2^{\lfloor\log_2(n-r)\rfloor + e}
- (2^{e}-1)(n-r)
+ (2^{e-1}-1)
<0
\Bigr\}.
\]
Then we have
\begin{equation*}
\sht^e =
\begin{cases}
\lceil \log_2 (n-r) \rceil + 1
& \textup{if } e < e_0, \\[0.3em]
\min \{ \lceil \log_2 (n-r) \rceil + 2,\ \lceil \log_2 n \rceil +1 \}
& \textup{if } e \geq e_0.
\end{cases}
\end{equation*}
Moreover,
\[
\sht^{\infty} = \sht^{\mathrm{reg}}
= \min \{ \lceil \log_2 (n-r) \rceil + 2,\ \lceil \log_2 n \rceil +1 \}.
\]
\end{itemize}
\end{enumerate}
\end{theoremA}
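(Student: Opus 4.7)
The plan is to apply the Fedder-type criteria for quasi-$F^e$-splitting and for quasi-$F$-regularity established in \cite{Yoshikawa25-fedder}, which reformulate $n$-quasi-$F^e$-splitness (respectively $n$-quasi-$F$-regularity) of a hypersurface $R = S/(f)$ in terms of explicit colon-ideal conditions involving $f^{p^e-1}$ together with its iterated Witt-vector lifts. Since each non-taut RDP can be presented as $R = k[[x,y,z]]/(f)$ with $f$ given by Artin's classification, this reduces \Cref{intro:thm:qFeht} to a finite sequence of explicit polynomial computations, organized type by type.

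For part (1), covering all non-taut RDPs outside type $D$ in characteristic $2$, I would run through the finite list of cases (certain $A_n$'s in characteristics dividing $n+1$, together with the wild $E_6, E_7, E_8$ types in characteristics $2,3,5$) and apply the Fedder-type criterion directly to each defining equation to produce the entries of Table~\ref{table:RDPs}. The equality $\sht^e = \sht^\infty$ for $e \geq 2$ follows because the leading-monomial structure of $f^{p^e-1}$ responsible for the obstruction becomes periodic in $e$ after a single Frobenius iteration. Part (2) is handled by the same method on the explicit equations for $D_{2n}^0$ and $D_{2n+1}^0$: here a single doubling pattern governs the entire Witt-vector lift, producing the $e$-independent answer $\lceil \log_2 n\rceil + 1$, and the same computation simultaneously verifies the quasi-$F$-regularity criterion, forcing $\sht^e = \sht^\infty = \sht^{\reg}$.

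Part (3) is the technical core. The parameter $r$ introduces a correction term in the $D$-type equation whose Frobenius behavior competes with the principal term, so the height depends on the base-$2$ expansions of both $n$ and $n-r$. For each $e$ I would determine the exponent of the surviving leading monomial of $f^{p^e-1}$ modulo $(x,y,z)^{[p^e]}$ and identify which Witt-vector lift first kills the obstruction class in $H^d_\m(R) \to H^d_{W_n(\m)}(Q^e_{R,n})$. The threshold $e_0$ is precisely the first $e$ at which the $r$-contribution is swallowed by the modulus ideal, yielding the stated inequality
\[
2^{\lfloor\log_2(n-r)\rfloor+e} - (2^{e}-1)(n-r) + (2^{e-1}-1) < 0.
\]
The two candidates $\lceil \log_2 (n-r)\rceil + 2$ and $\lceil \log_2 n\rceil + 1$ in the stable value arise from two competing lifts, one coming from the $r$-corrected main term and one from an auxiliary Witt-vector cancellation; the minimum is the actual obstruction threshold.

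The main obstacle will be the bookkeeping in part (3): one must track, for each pair $(n,r)$ and each $e$, exactly which monomials of $f^{p^e-1}$ lift to nonzero classes in $Q^e_{R,n}$, and then cross-check the output against the Fedder-type criterion for quasi-$F$-regularity so that $\sht^\infty$ and $\sht^{\reg}$ genuinely coincide at the stable value. The three distinct subcases ($n-r = 1$, $n-r$ a power of $2$, and $n-r$ not a power of $2$) reflect the different ways the base-$2$ carry structure of $n-r$ interacts with the Frobenius pushouts, and each must be verified separately.
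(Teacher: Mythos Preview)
Your overall strategy---apply the Fedder-type criteria from \cite{Yoshikawa25-fedder} to Artin's explicit equations and proceed type by type---is exactly what the paper does. One factual correction: there are no non-taut $A_n$ RDPs in any characteristic (all $A_n$ are taut, hence strongly $F$-regular by \cite{HaraRDP}), so the finite list in part~(1) consists only of the $E$-types in characteristics $2,3,5$.

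Where your outline diverges from the paper is in the mechanism, and this is where the genuine gap lies. The paper does not argue via ``periodicity of leading monomials'' or an abstract ``doubling pattern.'' For part~(1), it produces for each $E$-type an explicit witness triple $(e,a,c)$, tabulated separately, making $g = a f^{p^{e+n-1}-1}$ and $gc^{p^n-1}$ satisfy conditions (D1)--(D3) of the criterion; the failure of $2$-quasi-$F$-regularity for $E_7^3$ and $E_8^4$ in $p=2$ requires a separate direct computation. For part~(3), the lower bound $\sht^e \ge h_e$ is obtained by showing that every monomial $(x^2y)^A(xy^n)^B(xy^{n-r}z)^C(z^2)^D$ appearing in $f\,\Delta_1(f)^{2^{h_e-2}-1}J_e$ lies in $\m^{[2^{h_e-1}]}$; this reduces to a system of linear inequalities in $(A,B,C,D)$ together with two auxiliary lemmas computing the ideals $J_m = u^{m-1}(F_*^{m-1}f^{2^{m-1}-1}R)$ and a recursive integer sequence $\alpha_m$. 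The upper bound constructs specific elements $g$ (for instance $g = f^{2^{e+h_e-1}-1}\, y^{2^{\lfloor\log_2(n-r)\rfloor+e}-(2^e-1)(n-r)+(2^{e-1}-1)}$ in one case), and condition~(D3) is verified by computing the $2$-adic valuation of the unique surviving multinomial coefficient via Kummer's theorem. The equality $\sht^\infty = \sht^{\reg}$ then follows by exhibiting an explicit test element $c = y^{4n}$ and checking that $g/c^{2^{h_e}-1}$ still lies in $(f^{2^{e+h_e-1}-1})$ for $e \gg 0$. These concrete constructions and coefficient computations are the entire substance of the argument, and your outline does not yet engage with any of them.
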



Our second main result gives a positive answer to \Cref{question} for localizations of certain graded rings.

\begin{theoremA}[\Cref{graded-case}, cf.~\cite{TWY24}*{Corollary~4.19}]\label{intro:graded}
Let $S$ be an $F$-finite Noetherian normal $\Z_{\ge 0}$-graded ring of characteristic $p>0$, and set $d := \dim S$.
Assume $S_0 = k$ is a field, put $\m := S_+$ and $R := S_\m$, and assume that $\Spec(R)\setminus\{\m\}$ is $F$-rational and that $R$ is Gorenstein.
If $R$ is not $F$-pure, then
\[
\sht^\infty(R)=\sht^{\reg}(R).
\]
\end{theoremA}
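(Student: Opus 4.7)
The plan is to set $n:=\sht^\infty(R)$, which is finite by hypothesis, and to prove that $R$ is $n$-quasi-$F$-regular; since $\sht^\infty(R)\le\sht^{\reg}(R)$ holds automatically, this will yield the claimed equality. Concretely, for each $c\in R^\circ$ I must produce some $e\ge 1$ with $\Phi^{e,c}_{R,n}$ injective on $H^d_\m(R)$. The map $\Phi^{e,c}_{R,n}$ factors through $\Phi^e_{R,n}$, which is injective for every $e$ by the very definition of $n=\sht^\infty(R)$, so the task is to show that for some $e$ the subsequent map $H^d_{W_n(\m)}(Q^e_{R,n})\to H^d_{W_n(\m)}(Q^{e,c}_{R,n})$ does not kill the image of the socle of $H^d_\m(R)$. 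If $c\notin\m$ then $[c]$ is a unit and the conclusion is automatic, so I reduce to $c\in\m$; using the graded structure on $S$ and the fact that $\Spec(R)\setminus\{\m\}$ is strongly $F$-regular (being Gorenstein and $F$-rational), I further reduce to the case that $c$ is the image of a nonzero homogeneous element of $S_+$.

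The main strategy is to apply the Fedder-type criterion of \cite{Yoshikawa25-fedder}, which reformulates the injectivity of $\Phi^{e,c}_{R,n}$ as the non-containment of an explicit Witt-vector element $[c]\cdot u_e\in W_n(S)$ in an explicit Frobenius-type ideal $J_{e,n}\subset W_n(S)$; here $u_e$ is the Fedder element detecting $n$-quasi-$F^e$-splitness, and both $u_e$ and $J_{e,n}$ are homogeneous for the grading of $S$. The hypothesis $n=\sht^\infty(R)$ gives $u_e\notin J_{e,n}$ for every $e$. The heart of the argument is then to combine this with (i) the degree calculation showing that the homogeneous degree of $u_e$ grows roughly like $-a(S)\cdot p^e$ (from the description of the socle of $H^d_\m(R)$ via the $a$-invariant for graded Gorenstein rings) and (ii) the fact that, since $\Spec(R)\setminus\{\m\}$ is strongly $F$-regular, the obstruction $[c]\cdot u_e\bmod J_{e,n}$ is supported only at $\m$; this forces the non-containment $[c]\cdot u_e\notin J_{e,n}$ once $e$ is large enough that the degree of $u_e$ exceeds the shift contributed by the fixed element $c$.

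The main obstacle will be the degree bookkeeping inside the Witt-vector ring $W_n(S)$, which is only Verschiebung-filtered rather than literally $\Z$-graded: the Teichm\"uller lift $[c]$ preserves the grading, but $V$ rescales it by $p$, and multiplication by $[c]$ acts on higher Witt components via the identity $[c]\cdot V^i(x)=V^i([c^{p^i}]x)$, so each slot of the Fedder element has to be tracked separately. The proof will parallel the strategy behind \cite{TWY24}*{Corollary~4.19}, which already yields the qualitative finiteness $\sht^{\reg}(R)<\infty$; the present sharpening requires keeping careful track of the level $n$ throughout the Fedder-criterion analysis in order to pin down the exact equality $\sht^\infty(R)=\sht^{\reg}(R)$.
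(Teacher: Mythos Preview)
Your outline has two genuine gaps.

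First, you never establish that $a(S)<0$, and this is not automatic: it is exactly where the non-$F$-pure hypothesis is used. Without $a(S)<0$, your claim that ``the homogeneous degree of $u_e$ grows roughly like $-a(S)\cdot p^e$'' gives growth in the wrong direction (or none), and the entire ``degree of $u_e$ eventually exceeds the shift from $c$'' argument collapses. The paper spends the first half of its proof on precisely this point, arguing by contradiction: assuming $a(S)\ge 0$ one first forces $a(S)=0$, then takes a degree-$0$ socle element $\eta\in H^d_\m(R)$ and, using that non-$F$-purity kills the Frobenius $F^e_*H^d_\m(R)_0\to F^{e+1}_*H^d_\m(R)_0$, performs a diagram chase on the cohomology of $B^{e'}_{R,h}$ and $B^{e'}_{R,h-1}$ to contradict $h$-quasi-$F^{e+1}$-splitness. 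You have no analogue of this step.

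Second, the Fedder-type criterion you invoke is formulated for hypersurfaces $R/fR$ in a regular local ring (this is how \Cref{thm:criterion} is stated), not for arbitrary graded Gorenstein rings; you would need either a generalisation or a reduction, and you supply neither. The paper sidesteps this completely: rather than a Fedder element, it uses the characterisation of $n$-quasi-$F$-regularity via the vanishing of $0^*_n\subset H^d_\m(R)$ from \cite{KTTWYY3}, and proves $0^*_h=0$ by analysing the degrees of the filtration $\widetilde{0^*_m}\subset H^d_\m(W_m(R))$, showing $t_m:=\inf\{l:(\widetilde{0^*_m})_l\ne 0\}=p^{m-1}t_1$ and then choosing $e$ with $p^{h-1}t_1>p^e a(S)$. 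So your ``degree growth beats fixed shift'' intuition is on the right track, but the actual implementation should run through local cohomology and the $\widetilde{0^*_m}$ modules rather than a Fedder element whose existence in this generality you have not justified.
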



\begin{ackn}
The authors wish to express their gratitude to Christian Liedtke, Gebhard Martin,  Yuya Matsumoto, and Shunsuke Takagi for valuable discussions.
The first author was supported by JSPS KAKENHI Grant Number JP25K17228.
The second author was supported by JSPS KAKENHI Grant number JP24K16889.
\end{ackn}

\section{Quasi-$F^\infty$-split height and quasi-$F$-regular heights for RPDs}

In this section, we compute quasi-$F^\infty$-split heights and quasi-$F$-regular heights for RDPs.
As a consequence, we obtain $\sht^{\infty}(R)=\sht^{\reg}(R)$ for non-$F$-pure RDP $R$.

\subsection{Criteria for quasi-$F^\infty$-splitting and quasi-$F$-regularity}
In this subsection, we summarize criteria for quasi-$F$-splitting and quasi-$F$-regularity.

\begin{convention}
In this paper, $k$ is a perfect field of characteristic $p>0$, and
$R=k[[x_1,\ldots,x_N]]$ is a formal power series ring over $k$.
Let $\m=(x_1,\ldots,x_N)$ be the maximal ideal of $R$ generated by the variables.
Let $A=W(k)[[x_1,\ldots,x_N]]$ be a formal power series ring over $W(k)$, and
let $\pi \colon A \to R$ be the natural map.
By abuse of notation, the ideal $(x_1,\ldots,x_N)$ of $A$ is also denoted by $\m$.
We define a lift of the Frobenius morphism $\phi$ by
\[
\phi \colon A \to A\ \qquad x_i \mapsto x_i^p.
\]
Then $\phi_*A$ is a free $A$-module with basis
\[
\{\phi_*x_1^{i_1}\cdots x_N^{i_N} \mid 0 \leq i_j \leq p-1 \}.
\]
The dual basis element corresponding to $\phi_*(x_1\cdots x_N)^{p-1}$
with respect to the above basis is denoted by $u$.
In the same way, we define the corresponding map on $R$;
by abuse of notation, it is also denoted by $u$.
\end{convention}

We first recall criteria for quasi-$F^e$-splitting and quasi-$F$-regularity
established in \cite{Yoshikawa25-fedder}.

\begin{thm}[{\cite{Yoshikawa25-fedder}*{Theorem~A,C}}]\label{thm:criterion}
Let $n\ge1$ be an integer, and let $f \in A/p^n$ be a non-zero divisor.
\begin{enumerate}
    \item
    Let $e \ge 1$ be an integer.
    The ring $R/fR$ is $n$-quasi-$F^e$-split if and only if there exists $g \in A$
    satisfying the following conditions:
    \begin{itemize}
        \item[(D1)]
        $u^{e+r-1}(\phi^{e+r-1}_* g) \in (p^r)$ for $1 \le r \le n-1$;
        \item[(D2)]
        $g$ admits a decomposition
        \[
            g = g_0 + p g_1 + \cdots + p^{n-1} g_{n-1}
        \]
        such that
        $u^r(\phi^r_* g_r) \in (f^{p^{e+n-r-1} - 1})$
        for $0 \le r \le n-1$;
        \item[(D3)]
        $u^{e+n-2}(\phi_*^{e+n-2} g) \notin (\mathfrak{m}^{[p]}, p^n)$.
    \end{itemize}

    \item
    Let $\tau(R/fR)$ be the test ideal of $R/fR$, and let
    $t \in \tau(R,fR)$ and $c \in A$ be elements such that the image of $c$ by $A \to R/fR$ is contained in 
    \[
         (t^4) \cap (R/fR)^\circ.
    \]
    Then $R/fR$ is $n$-quasi-$F$-regular if and only if there exist
    $g \in A$ and an integer $e \geq 1$ such that $g$ satisfies condition
    \textup{(D2)}, and $g c^{p^n - 1}$ satisfies conditions
    \textup{(D1)} and \textup{(D3)} in (1).
\end{enumerate}
\end{thm}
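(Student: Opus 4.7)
The plan is to imitate Fedder's strategy for the hypersurface $R/fR$, converting the injectivity of $\Phi^e_{R/fR,n}$ on top local cohomology into a concrete splitting datum on $A/p^n$ via Matlis duality, and reading off the conditions (D1)--(D3). Since $R/fR$ is Gorenstein and $\mathfrak{m}$-adically complete of dimension $d$, Matlis duality identifies the injectivity of
\[
\Phi^e_{R/fR,n}\colon H^d_\mathfrak{m}(R/fR)\to H^d_{W_n(\mathfrak{m})}(Q^e_{R/fR,n})
\]
with the surjectivity of its Matlis dual; trading the canonical module for $R/fR$ via the Gorenstein isomorphism and unwinding the pushout description of $Q^e_{R/fR,n}$ reduces this to constructing a $W_n$-linear map $Q^e_{R/fR,n}\to R/fR$ whose composite with $\Phi^e_{R/fR,n}$ generates the socle of $R/fR$.

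For part (1), I would then make such $W_n$-linear homomorphisms $F^e_* W_n(R/fR)\to R/fR$ explicit through the fact that $\phi_* A$ is free over $A$ with the standard monomial basis and distinguished dual basis element $u$. Any such map arises, through the $u$-trace at the level of $A$, from a lift $g\in A$ together with its $p$-adic layers $g = g_0 + p g_1 + \cdots + p^{n-1} g_{n-1}$. Compatibility with the Witt-vector Verschiebung and the restriction $W_n(R)\to R$ yields condition (D1); the requirement that the map factor through the pushout defining $Q^e_{R/fR,n}$ (equivalently, that it kill $f$-torsion at each Witt level) yields the membership relations $u^r(\phi^r_* g_r)\in (f^{p^{e+n-r-1}-1})$ of (D2); and ``generates the socle'' translates to the non-containment (D3) in $(\mathfrak{m}^{[p]},p^n)$.

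For part (2), the same duality is run for $\Phi^{e,c}_{R/fR,n}$. The extra $\cdot[c]$ in the second pushout contributes, after accounting for the Teichm\"uller lift to $W_n$ and $e$ Frobenius iterates, exactly the factor $c^{p^n-1}$, producing the substitution $g\leadsto g c^{p^n-1}$, so that (D1) and (D3) must be imposed on $g c^{p^n-1}$ while (D2) remains on $g$. The hypothesis $c\in(t^4)\cap(R/fR)^\circ$ with $t\in\tau(R,fR)$ is an instance of the standard Hochster--Huneke ``$t^4$ trick'', reducing the ``for every $c\in R^\circ$'' quantifier in the definition of quasi-$F$-regularity to a single element inside the fourth power of the test ideal.

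The main obstacle is the Witt-level bookkeeping in the explicit description: the decomposition $g=\sum p^r g_r$ is not canonical in $A$, and one must match its $p$-adic layers against the ghost components of $W_n$ while tracking $e$ Frobenius iterates, so that the precise indices $p^{e+n-r-1}-1$ in (D2) and the threshold $e+n-2$ in (D3) come out correctly in both directions of the equivalence. The $t^4$ reduction in part (2) is a secondary delicate point, but falls within standard test-element manipulations and does not introduce new conceptual difficulties beyond those already resolved in part (1).
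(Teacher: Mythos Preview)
The paper does not prove this theorem: it is stated with the citation \cite{Yoshikawa25-fedder}*{Theorem~A,C} and used as a black box throughout Section~2. There is therefore nothing in the present paper to compare your sketch against.

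That said, your outline is a plausible description of how such Fedder-type criteria are typically established: Matlis-dualize the injectivity of $\Phi^e_{R/fR,n}$ to a surjectivity statement, then parametrize $W_n$-linear maps $F^e_*W_n(R/fR)\to R/fR$ explicitly via the free $A$-module structure of $\phi_*A$ and the distinguished trace element $u$. The identification of (D1) with Verschiebung-compatibility, (D2) with factoring through the pushout, and (D3) with hitting the socle is the right shape. The delicate part you correctly flag---matching the non-canonical decomposition $g=\sum p^r g_r$ against ghost components so that the exponents $p^{e+n-r-1}-1$ and the index $e+n-2$ come out exactly right---is precisely where the work lies in \cite{Yoshikawa25-fedder}, and your sketch does not actually carry this out. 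If you intend this as more than a plan, you would need to make that bookkeeping explicit; as written it is a reasonable roadmap but not a proof.
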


\begin{corollary}[{\cite[Theorem B]{Yoshikawa25-fedder}}]\label{cor:non-qF^es condition}
Let $n, e \geq 1$ be integers, and let $f \in A/p^n$ be a non-zero divisor.
We define an $R$-module homomorphism $\theta$ by
\[
\theta \colon \Ker(u) \to R \ \qquad F_*a \mapsto u(F_*(\Delta_1(f^{p-1})a)).
\]
\begin{enumerate}
    \item
    If there exists $g \in f^{p^{e+n-1}-1} A/p^n$ satisfying conditions
    \textup{(D1)} and \textup{(D3)}, then $R/fR$ is $n$-quasi-$F^e$-split.
    \item
    Let $\tau(R/fR)$ be the test ideal of $R/fR$, and let
    $t \in \tau(R/fR)$ and $c \in A$ be elements such that the image of $c$ by $A \to R/fR$ is contained in 
    \[
         (t^4) \cap (R/fR)^\circ.
    \]
    If there exists $g \in f^{p^{e+n-1}-1} A/p^n$ such that
    $g c^{p^n-1}$ satisfies conditions \textup{(D1)} and \textup{(D3)},
    then $R/fR$ is $n$-quasi-$F$-regular.
    \item
    Define a sequence of ideals $\{I^e_n\}$ of $R$ inductively as follows.
    Set
    \[
    I^e_1 := f^{p-1} u^{e-1}(F_*^{e-1}f^{p^{e-1} - 1} R),
    \]
    and for each $n \geq 1$, define
    \[
    I^e_{n+1} := \theta\left(F_*( I^e_n \cap \Ker(u) )\right) + f^{p-1}R.
    \]
    If $R/fR$ is $n$-quasi-$F^e$-split, then
    $I^e_n \not\subseteq \mathfrak{m}^{[p]}$.

    \item
    Define another sequence of ideals $\{I'_n\}$ of $R$ as follows.
    Set
    \[
    I'_1 := f^{p-1} \mathfrak{m},
    \]
    and for each $n \geq 1$, define
    \[
    I'_{n+1} := \theta\left(F_*( I'_n \cap \Ker(u) )\right) + f^{p-1}R.
    \]
    If $R/fR$ is not $F$-pure and $I'_n \subseteq \mathfrak{m}^{[p]}$,
    then $R$ is not $n$-quasi-$F^2$-split.
\end{enumerate}
\end{corollary}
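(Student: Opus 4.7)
The plan is to deduce parts (1) and (2) immediately from \Cref{thm:criterion} by exhibiting a trivial decomposition, and to establish parts (3) and (4) by an induction on $n$ that tracks how the recursion defining $I^e_n$ and $I'_n$ reflects the Fedder-type conditions (D1), (D2), (D3).

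For part (1), given $g \in f^{p^{e+n-1}-1}A/p^n$ that satisfies (D1) and (D3), I would set $g_0 := g$ and $g_r := 0$ for $r \geq 1$. Then condition (D2) for $r=0$ asks $g_0 \in (f^{p^{e+n-1}-1})$, which is our assumption, and for $r \geq 1$ it holds trivially. Hence \Cref{thm:criterion}(1) applies and $R/fR$ is $n$-quasi-$F^e$-split. Part (2) follows from the same trivial decomposition applied in \Cref{thm:criterion}(2), using that $g$ already satisfies (D2) and that $gc^{p^n-1}$ satisfies (D1) and (D3) by hypothesis.

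For part (3), I would argue by induction on $n$ that if $R/fR$ is $n$-quasi-$F^e$-split then $I^e_n \not\subseteq \mathfrak{m}^{[p]}$. The base case $n=1$ reduces to the Fedder-style description of $F^e$-splitting: any witness $g$ supplied by \Cref{thm:criterion}(1) satisfies $g \in (f^{p^{e-1}-1})$ with $u^{e-1}(F^{e-1}_* g) \notin \mathfrak{m}^{[p]}$, and multiplying through by $f^{p-1}$ (as required to interpret the splitting on $R/fR$) exhibits an element of $I^e_1$ outside $\mathfrak{m}^{[p]}$. For the inductive step, I would take the decomposition $g = g_0 + pg_1 + \cdots + p^{n-1}g_{n-1}$ from \Cref{thm:criterion}(1) and peel off the top Witt coordinate: the Witt-vector expansion of Frobenius produces $\Delta_1(f^{p-1})$ as the correction term, which is exactly what the map $\theta$ encodes. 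Translating (D1) and (D2) through this peeling shows that the lower-order data yields an $(n-1)$-witness whose image in $I^e_{n-1} \cap \Ker(u)$ pushes forward under $\theta$ into $I^e_n$, and the non-vanishing supplied by (D3) propagates through the induction.

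Part (4) is a variant of this: when $R/fR$ is not $F$-pure, the leading Fedder term $f^{p-1}u^{e-1}(F^{e-1}_* f^{p^{e-1}-1}R)$ already lies in $\mathfrak{m}^{[p]}$, so the seed of the recursion must be replaced by the larger ideal $f^{p-1}\mathfrak{m}$, after which the same induction as in part (3) produces the claimed obstruction to $n$-quasi-$F^2$-splitting. The main obstacle I anticipate is the inductive step in part (3): it requires an explicit verification that $\theta(F_* a) = u(F_*(\Delta_1(f^{p-1})a))$ correctly implements the descent between consecutive Witt-vector levels, which rests on careful bookkeeping of how the dual basis element $u$ interacts with the Frobenius lift $\phi$ and with the Witt polynomial $\Delta_1$. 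Once this compatibility is confirmed, the inductions in parts (3) and (4) run in parallel without further difficulty.
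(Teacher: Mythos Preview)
Your argument for (1) and (2) is correct and matches the paper. The paper's only justification is the remark immediately following the corollary: any $g \in f^{p^{e+n-1}-1}A/p^n$ satisfies (D2) (by \cite{Yoshikawa25-fedder}*{Claim~5.5}), so (1) and (2) reduce to \Cref{thm:criterion}. Your trivial decomposition $g_0=g$, $g_r=0$ for $r\ge1$ makes this explicit.

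For (3) and (4) the paper gives no proof at all: the corollary is quoted wholesale from \cite{Yoshikawa25-fedder}*{Theorem~B}, and only the remark addressing (1) and (2) is supplied here. So there is no in-paper argument to compare your inductive sketch against. Your outline is a plausible plan and has the shape one expects from the cited reference, but note one slip in your base case: for $n=1$, condition (D2) reads $g\in(f^{p^e-1})$, not $g\in(f^{p^{e-1}-1})$. The factorization $f^{p^e-1}\equiv\phi^{e-1}(f^{p-1})\cdot f^{p^{e-1}-1}\pmod{p}$ is what pulls the outer $f^{p-1}$ through $u^{e-1}$ and makes $I^e_1 = f^{p-1}u^{e-1}(F^{e-1}_* f^{p^{e-1}-1}R)$ the correct object in the base step.
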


\begin{remark}
Let $n, e \geq 1$ be integers, and let $f \in A/p^n$ be a non-zero divisor.
If $g \in f^{p^{e+n-1}-1}A/p^n$, then $g$ satisfies condition \textup{(D2)}
by \cite{Yoshikawa25-fedder}*{Claim~5.5}.
Thus, \Cref{cor:non-qF^es condition}~(1) and (2) follow from
\Cref{thm:criterion}.
\end{remark}

\begin{lem}\label{lem:reduction lemma}
Let $a,f \in A$ with $f \notin pA$, and let $h \in \Z_{\geq 1}$.
For each integer $m \geq 0$, we write $f_m := f^{p^m-1}$.
Then the following conditions are equivalent:
\begin{itemize}
    \item[(i)]
    $u^{e+r-1}(f_{e+h-1}a) \in (p^r)$ for $1 \leq r \leq h-1$;
    \item[(ii)]
    $u^{e+r-1}(f_{e+r-1}a) \in (p^r)$ for $1 \leq r \leq h-1$; and
    \item[(iii)]
    $u^{e+r-1-s}(f_{e+r-1}a) \in (p^{r-s})$
    for $0 \leq s \leq r$ and $1 \leq r \leq h-1$.
\end{itemize}
\end{lem}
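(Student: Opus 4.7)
The plan is to rewrite all three conditions as statements about the common quantity $u^{e+r-1}(f_{e+r-1}\,a)$, differing only by multiplication by an explicit power of $f$, and then to use that $f\notin pA$ makes every positive power of $f$ a non-zerodivisor in $A/p^rA$. The key computational tool is the projection formula
\[
u^m(c^{p^m}\,y)=c\cdot u^m(y)\qquad (c,y\in A,\ m\ge0),
\]
which I would prove by induction on $m$ from the $A$-linearity of $u\colon\phi_*A\to A$, using that the $A$-module structure on $\phi^m_*A$ factors through $\phi^m$ so that multiplication by $c$ corresponds to multiplication by $c^{p^m}$ on elements.

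For the equivalence (i) $\Leftrightarrow$ (ii), I would observe the factorization
\[
f_{e+h-1}=\bigl(f^{p^{h-r}-1}\bigr)^{p^{e+r-1}}\cdot f_{e+r-1}
\]
and apply the projection formula with $m=e+r-1$ and $c=f^{p^{h-r}-1}$ to obtain
\[
u^{e+r-1}(f_{e+h-1}\,a)=f^{p^{h-r}-1}\cdot u^{e+r-1}(f_{e+r-1}\,a).
\]
Since $A$ is a domain and $f\notin pA$, a short induction on $r$ using the $p$-torsion freeness of $A$ shows that $f^{p^{h-r}-1}$ is a non-zerodivisor in $A/p^rA$, so one side lies in $(p^r)$ if and only if the other does.

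For (ii) $\Leftrightarrow$ (iii), the implication (iii) $\Rightarrow$ (ii) is immediate by setting $s=0$. For the converse, I would fix $0\le s\le r\le h-1$: the case $s=r$ is trivial since $p^{r-s}=1$, and for $s<r$ I would use the factorization
\[
f_{e+r-1}=\bigl(f^{p^s-1}\bigr)^{p^{e+r-1-s}}\cdot f_{e+r-1-s}
\]
together with the projection formula with $m=e+r-1-s$ to get
\[
u^{e+r-1-s}(f_{e+r-1}\,a)=f^{p^s-1}\cdot u^{e+r-1-s}(f_{e+r-1-s}\,a),
\]
and the right-hand factor lies in $(p^{r-s})$ by (ii) applied with index $r-s\in\{1,\ldots,h-1\}$.

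The whole argument is essentially bookkeeping in the exponents of $f$, and the only step that requires care is keeping track of which exponents are raised to $p^m$ before being extracted via the projection formula. Once the projection formula and the non-zerodivisor property of $f$ modulo $p^r$ are in place, the three equivalences drop out immediately, so I do not expect any serious obstacle.
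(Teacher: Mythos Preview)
Your projection formula $u^m(c^{p^m}y)=c\,u^m(y)$ is false in $A=W(k)[[x_1,\dots,x_N]]$, and this is the heart of the matter. The $A$-linearity of $u^m\colon\phi^m_*A\to A$ gives only $u^m(\phi^m(c)\,y)=c\,u^m(y)$; since $A$ has mixed characteristic and $\phi$ is merely a \emph{lift} of Frobenius (sending $x_i\mapsto x_i^p$ and acting by the Witt Frobenius on coefficients), one has $\phi^m(c)\neq c^{p^m}$ for general $c$. Concretely, your factorization $f_{e+h-1}=(f^{p^{h-r}-1})^{p^{e+r-1}}\cdot f_{e+r-1}$ is correct, but $(f^{p^{h-r}-1})^{p^{e+r-1}}$ is not $\phi^{e+r-1}$ of anything, so it cannot be pulled through $u^{e+r-1}$. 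The same error occurs in your (ii)$\Rightarrow$(iii) step. As a sanity check: if your formula held, then (i)$\Leftrightarrow$(ii) would be immediate and condition (iii) would play no role, whereas the paper needs (iii) precisely to control the extra terms.

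The discrepancy between $c^{p^m}$ and $\phi^m(c)$ is governed by the expansion
\[
c^{p^m}=\sum_{l=0}^m p^l\,\phi^{m-l}(\Delta_l(c)),
\]
so that
\[
u^m(c^{p^m}y)=c\,u^m(y)+\sum_{l=1}^m p^l\,u^l\bigl(\Delta_l(c)\cdot u^{m-l}(y)\bigr).
\]
For the equivalence you want, the correction terms with $l<r$ must themselves lie in $(p^{r-l})$, and this is exactly a statement of the form (iii) at a smaller index. The paper therefore argues by induction on $h$: assuming (i)--(iii) hold for $r\le h-2$, it first shows (iii) for $r=h-1$ with $s\ge1$ using the inductive hypothesis to kill the $\Delta$-corrections, and only then deduces the equivalence of (i) and (ii) at $r=h-1$. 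Your non-zerodivisor argument for $f$ modulo $p^r$ is fine, but it only becomes relevant once the correction terms have been handled.
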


\begin{proof}
We use the notation $\Delta_m$ introduced in
\cite{Yoshikawa25}*{Theorem~3.6}.
By \cite{Yoshikawa25}*{Equation~(3.1)}, we have
\begin{equation}\label{eq:Delta}
    a^{p^m}
    =
    \sum_{l=0}^{m} p^m \phi^{m-l}(\Delta_l(a)).
\end{equation}
We prove the lemma by induction on $h$.
Thus, we may assume that conditions \textup{(i)}--\textup{(iii)} hold for
$r \leq h-2$.
It remains to verify them for $r=h-1$.

First, we show that condition \textup{(iii)} holds for $s \neq 0$.
Indeed, by \eqref{eq:Delta} and 
\[
f_{e+h-2}=f_s^{p^{e+h-2-s}}f_{e+h-2-s},
\]
 we have
\begin{align*}
    u^{e+h-2-s}\bigl(\phi_*^{e+h-2-s}(f_{e+h-2}a)\bigr)
    &=
    \sum_{t=0}^{e+h-2-s}
    p^t
    u^t\Bigl(
        \phi_*^{t}\bigl(
            \Delta_t(f_s)\,
            u^{e+h-2-s-t}\bigl(
                \phi_*^{e+h-2-s-t}(f_{e+h-2-s}a)
            \bigr)
        \bigr)
    \Bigr).
\end{align*}

By setting $r = h - 1 - s \leq h-2$ and $s=t$, we apply the induction hypothesis (iii) to obtain the following:
\[
p^t
u^{e+h-2-s-t}\bigl(
    \phi_*^{e+h-2-s-t}(f_{e+h-2-s}a)
\bigr)
\in (p^{h-1-s}).
\]

Next, we show that condition \textup{(i)} is equivalent to condition
\textup{(ii)}.
Indeed, by \eqref{eq:Delta}, we have
\begin{align*}
    u^{e+h-2}\bigl(\phi_*^{e+h-2}(f_{e+h-1}a)\bigr)
    &=
    \sum_{s=0}^{e+h-2}
    p^s
    u^s\Bigl(
        \phi_*^{s}\bigl(
            \Delta_s(f_1)\,
            u^{e+h-2-s}\bigl(
                \phi_*^{e+h-2-s}(f_{e+h-2}a)
            \bigr)
        \bigr)
    \Bigr) \\
    &\equiv
    f_1\,
    u^{e+h-2}\bigl(\phi_*^{e+h-2}(f_{e+h-2}a)\bigr)
    \mod p^{h-1}.
\end{align*}
Moreover, condition \textup{(iii)} for $s=0$ coincides with condition
\textup{(ii)}.
This completes the proof.
\end{proof}

\subsection{Quasi-$F^\infty$-split and quasi-$F$-regular heights for RDPs}

In this subsection, we compute the quasi-$F^e$-split heights and the quasi-$F$-regular heights for RDPs.
It is known that taut RDPs are strongly $F$-regular, thus we have $\sht^e=\sht^{\reg}=1$ for every $e \geq 1$ by \cite{HaraRDP}.

\begin{theorem}[\Cref{intro:thm:qFeht}]
\label{thm:qFeht}
We completely determine the quasi-$F^e$-split heights and the quasi-$F$-regular heights for non-taut RDPs as follows.
\begin{enumerate}
\item
For types other than type $D$ in characteristic $2$, the heights are given in
Table~\ref{table:RDPs}.
Moreover, for every non-$F$-pure RDP, we have
\[
\sht^e = \sht^{\infty} \qquad (e \geq 2).
\]

\item
For types $D_{2n}^0$ and $D_{2n+1}^0$, we have
\[
\sht^e = \sht^{\infty} = \sht^{\reg} = \lceil \log_2 n \rceil +1 \qquad (e \geq 1).
\]

\item
For types $D^r_{2n}$ and $D^r_{2n+1}$ \textup{($r=1,\ldots,n-1$)} in characteristic $p=2$,
the following statements hold.
\begin{itemize}
\item
We have
\[
\sht = \lceil \log_2 (n-r) \rceil +1.
\]

\item
Suppose that $n-r=1$.
Then
\[
\sht^e = \sht^{\infty} = \sht^{\mathrm{reg}} = 1 \qquad (e=1).
\]

\item
Suppose that $n-r>1$ is a power of $2$.
Then
\[
\sht^e = \sht^{\infty} = \sht^{\mathrm{reg}}
= \log_2 (n-r) + 2 \qquad (e \geq 2).
\]

\item
Suppose that $n-r>1$ is not a power of $2$.
Define
\[
e_0 :=
\min
\Bigl\{
e \in \Z_{\geq 2}
\ \Big|\ 
2^{\lfloor\log_2(n-r)\rfloor + e}
- (2^{e}-1)(n-r)
+ (2^{e-1}-1)
<0
\Bigr\}.
\]
Then we have
\begin{equation}
\label{eqn:Dnhte}
\sht^e =
\begin{cases}
\lceil \log_2 (n-r) \rceil + 1
& \textup{if } e < e_0, \\[0.3em]
\min \{ \lceil \log_2 (n-r) \rceil + 2,\ \lceil \log_2 n \rceil +1 \}
& \textup{if } e \geq e_0.
\end{cases}
\end{equation}
Moreover,
\[
\sht^{\infty} = \sht^{\mathrm{reg}}
= \min \{ \lceil \log_2 (n-r) \rceil + 2,\ \lceil \log_2 n \rceil +1 \}.
\]
\end{itemize}
\end{enumerate}
\end{theorem}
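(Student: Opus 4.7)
The strategy is to realize each non-taut RDP in Artin's normal form as a hypersurface $R = A/fA$ with $A = W(k)[[x,y,z]]$ and an explicit equation $f$, and then to apply the Fedder-type criteria of \Cref{thm:criterion} and \Cref{cor:non-qF^es condition}. For every type the calculation reduces to computing the Frobenius pushforward $u^{e+n-2}(\phi_*^{e+n-2}(f^{p^{e+n-1}-1}\cdot m))$ modulo $(\m^{[p]},p^n)$ for suitable monomials $m \in A$, which simultaneously controls the splitting ansatz and the iterated ideal obstruction.

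Upper bounds on $\sht^e(R)$ proceed by producing, for each claimed value $n$, an element $g \in f^{p^{e+n-1}-1}A/p^n$ (so that (D2) is automatic by the remark after \Cref{cor:non-qF^es condition}) for which (D1) and (D3) hold; \Cref{lem:reduction lemma} is used repeatedly to reduce (D1) to its strongest form. For the quasi-$F$-regular height we instead invoke \Cref{cor:non-qF^es condition}(2): an explicit test element $t \in \tau(R/fR)$ and a $c$ with image in $(t^4)$ are available from the Artin equations, and we construct $g$ such that $gc^{p^n-1}$ satisfies (D1), (D3). The matching lower bounds come from the iterated ideal formalism of \Cref{cor:non-qF^es condition}(3): starting from $I^e_1 = f^{p-1}u^{e-1}(F^{e-1}_* f^{p^{e-1}-1}R)$ we compute $I^e_n$ inductively, and whenever $I^e_n \subseteq \m^{[p]}$ the contrapositive of (3) yields $\sht^e(R) > n$.

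For part~(1) this is a direct type-by-type verification using the defining equations for $A_n, E_6, E_7, E_8$ and $D$-type in characteristic $\neq 2$; the stabilization $\sht^e = \sht^\infty$ for $e \geq 2$ follows because, once $e \geq 2$, the residue of $u^{e+n-2}(\phi_*^{e+n-2}(f^{p^{e+n-1}-1}))$ modulo $(\m^{[p]},p^n)$ no longer depends on $e$ in the governing term. For part~(2), the equations of $D^0_{2n}$ and $D^0_{2n+1}$ in characteristic $2$ make the Frobenius iteration essentially a doubling of exponents on $y$, producing the $\lceil \log_2 n\rceil + 1$ pattern, which matches at each $e \geq 1$ and so also at $e=\infty$ and at the regular level via \Cref{cor:non-qF^es condition}(2).

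The main obstacle is part~(3) for $D^r_{2n}, D^r_{2n+1}$ in characteristic $2$ with $n-r > 1$ not a power of $2$. Here the presence of the twisting term $xy^{n-r}z$ means that condition (D1) for the natural short ansatz translates into the family of inequalities
\[
2^{\lfloor \log_2(n-r)\rfloor + e} - (2^e - 1)(n-r) + (2^{e-1}-1) < 0,
\]
and $e_0$ is by definition the smallest $e \geq 2$ at which such an inequality first flips sign; below $e_0$ the short ansatz succeeds and gives $\sht^e = \lceil \log_2(n-r)\rceil + 1$, while from $e_0$ on one must enlarge the ansatz by one Witt slot, giving the upper bound $\min\{\lceil\log_2(n-r)\rceil + 2,\ \lceil\log_2 n\rceil + 1\}$. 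The delicate step is to show that the iterated ideal $I^e_n$ really lands in $\m^{[p]}$ at $n = \lceil\log_2(n-r)\rceil + 1$ once $e \geq e_0$, which requires tracking the monomial supports of $I^e_n$ through the $\theta$-recursion and using that $n-r$ is not a power of $2$ to rule out cancellation at the borderline. Letting $e \to \infty$ in \eqref{eqn:Dnhte} and combining with the general inequality $\sht^\infty \leq \sht^{\mathrm{reg}}$ from the definitions then forces equality $\sht^\infty = \sht^{\mathrm{reg}}$, with a quasi-$F$-regularity witness at the claimed level supplied by \Cref{cor:non-qF^es condition}(2).
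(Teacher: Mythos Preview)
Your plan follows the paper's approach closely: Fedder-type criteria, explicit ansatz $g\in f^{p^{e+n-1}-1}A$ for upper bounds, iterated ideals $I^e_n$ for lower bounds. Three points where the plan is imprecise or understates the work:

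\textbf{(a)} The inequality defining $e_0$ is not a (D1) condition. It is the sign of the $y$-exponent in the ansatz
\[
g = f_{e+h-1}\,y^{\,2^{\lfloor\log_2(n-r)\rfloor+e}-(2^e-1)(n-r)+(2^{e-1}-1)}
\]
at height $h=\lfloor\log_2(n-r)\rfloor+2$. For $e<e_0$ this exponent is nonnegative, so $g$ exists and (D1), (D3) are checked via a monomial-by-monomial analysis of the terms $(x^2y)^A(xy^n)^B(xy^{n-r}z)^C(z^2)^D$ surviving $u^{e+s-1}$, finishing with Kummer's theorem for the $2$-adic valuation of a multinomial coefficient. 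For $e\ge e_0$ the exponent is negative, so one shifts to a second ansatz at height $h+1$ (or a separate ansatz giving the $\lceil\log_2 n\rceil+1$ bound).

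\textbf{(b)} The lower bound in part~(3) does not come directly from tracking $I^e_n$. After the $(A,B,C,D)$ analysis one is reduced to proving $J_e:=u^{e-1}(F_*^{e-1}f^{2^{e-1}-1}R)\subset(x,y^{2^{\lceil\log_2(n-r)\rceil}-(n-r)+1},z)$, and this requires two auxiliary lemmas: a recursive description $J_m=(x,y^{\alpha_m},y^{\lfloor n/2\rfloor},z)$ with $\alpha_{m+1}=\lfloor(n-r+\alpha_m)/2\rfloor$, and then a closed formula for $\alpha_m$ via the base-$2$ digits of $n-r$. This is where the hypothesis $e\ge e_0$ actually enters.

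\textbf{(c)} For part~(1), the paper does not use a uniform ``governing term'' argument; it supplies an explicit witness $(e,a,c)$ for each type. More importantly, for the $F$-pure types $(2,E_7^3)$ and $(2,E_8^4)$ one has $\sht^\infty=1$ but $\sht^{\mathrm{reg}}=3$, so your $\sht^\infty\le\sht^{\mathrm{reg}}$ reasoning gives nothing, and the iterated-ideal criterion only bounds $\sht^e$, not $\sht^{\mathrm{reg}}$. The paper rules out $2$-quasi-$F$-regularity in these cases by a direct computation showing no $g$ can satisfy \Cref{thm:criterion}(2) at $n=2$.
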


\begin{remark}
\label{rmk:e0}
The set appearing in the definition of $e_0$ is non-empty.
Indeed, we have
\[
\lim_{e \to \infty}
\left(
2^{\lfloor\log_2(n-r)\rfloor}
- \frac{2^{e}-1}{2^e}(n-r)
+ \frac{2^{e-1}-1}{2^e}
\right)
=
2^{\lfloor\log_2(n-r)\rfloor} - (n-r) + \frac{1}{2} < 0,
\]
where the last inequality follows from the assumption that $n-r$ is not a power of $2$.
Moreover, a direct computation shows that
\[
2^{\lfloor\log_2(n-r)\rfloor + e}
- (2^{e}-1)(n-r)
+ (2^{e-1}-1)
<0
\]
for any $e \geq e_0$.
\end{remark}

\begin{proof}[Proof of \Cref{thm:qFeht}]
Set $R:=k[[x,y,z]]$ and $\m:=(x,y,z)$.

First, we prove (1).
Let $f \in R$ be one of the elements listed in Table~\ref{table:RDPs}.
The quasi-$F$-split heights are given in \cite{kty}*{Table~1}.

If the pair $(p,\mathrm{Dyn}(R/f))$ is one of
\[
(2,E^1_7),\ (2,E^2_7),\ (2,E^2_8),\ (2,E^3_8),\ (3,E^1_8),
\]
then $R/f$ is not $\sht(R)$-quasi-$F^2$-split by \Cref{cor:non-qF^es condition}~(4).
In particular,
\[
\sht^2(R/f) \geq \sht(R/f)+1.
\]
In this case, $R/f$ is $(\sht(R)+1)$-quasi-$F$-regular by
\Cref{cor:non-qF^es condition}~(2), and hence
\[
\sht^{\reg}(R/f)=\sht^2(R/f).
\]
Indeed, take $e$, $a$, and $c$ as in \Cref{table:RDPs-qfr}, and set
$n:=\sht(R)+1$ and $g:=af^{p^{e+n-1}}$.
Then $gc^{p^n-1}$ satisfies conditions (D1) and (D3) in \Cref{thm:criterion}.

On the other hand, if the pair $(p,\mathrm{Dyn}(R/f))$ is one of
\[
(2,E^0_6),\ (2,E^0_7),\ (2,E^0_8),\ (2,E^1_8),\ (3,E^0_6),\ (3,E^0_7),\ (3,E^0_8),\ (5,E^0_8),
\]
then $R/f$ is $\sht(R)$-quasi-$F$-regular.
Indeed, take $e$, $a$, and $c$ as in \Cref{table:RDPs-qfr}, and set
$n:=\sht(R)$ and $g:=af^{p^{e+n-1}}$.
Then $gc^{p^n-1}$ satisfies conditions (D1) and (D3) in \Cref{thm:criterion}.

If the pair $(p,\mathrm{Dyn}(R/f))$ is one of
\[
(2,E^1_6),\ (3,E^1_6),\ (3,E^1_7),\ (3,E^2_8),\ (5,E^1_8),
\]
then $R/f$ is $F$-pure, and hence $\sht^e(R/f)=\sht^{\infty}(R/f)=1$.
Furthermore, $R/f$ is $2$-quasi-$F$-regular by \Cref{cor:non-qF^es condition}~(2), and in particular,
\[
\sht^{\reg}(R/f)=2.
\]
Indeed, take $e$, $a$, and $c$ as in \Cref{table:RDPs-qfr}, and set
$n:=2$ and $g:=af^{p^{e+n-1}}$.
Then $gc^{p^n-1}$ satisfies conditions (D1) and (D3) in \Cref{thm:criterion}.

Finally, assume that the pair $(p,\mathrm{Dyn}(R/f))$ is one of
\[
(2,E^3_7),\ (2,E^4_8).
\]
Then $R/f$ is $3$-quasi-$F$-regular.
Indeed, take $e$, $a$, and $c$ as in \Cref{table:RDPs-qfr}, and set
$n:=3$ and $g:=af^{p^{e+n-1}}$.
Then $gc^{p^n-1}$ satisfies conditions (D1) and (D3) in \Cref{thm:criterion}.

We show that type $(2,E^3_7)$ is not $2$-quasi-$F$-regular.
Suppose that $(R/f)_\m$ is $2$-quasi-$F$-regular, where $f=z^2+x^3+xy^3+xyz$.
Take $c:=x^4 \in A$.
By \Cref{thm:criterion}, there exist $g \in A$ and an integer $e \geq 1$ such that
$g$ satisfies condition (D2) and $gc^3$ satisfies conditions (D1) and (D3).
By condition (D2), we can write
\[
g=g_0+pg_1
\]
so that $g_0 \in (f^{2^{e+1}-1})$ and $u(\phi_*g_1) \in (f^{2^{e}-1})$.
Since
\begin{align*}
    f^{2^{e+1}-1}
    &= f^{2^{e}}f^{2^{e-1}} \cdots f^2 f \\
    &\equiv \Bigl((xyz)^{2^e}+2\bigl((x^3z^2)^{2^{e-1}}+(xy^3z^2)^{2^{e-1}}+(xyz^3)^{2^{e-1}}\bigr)\Bigr)f^{2^{e-1}} \cdots f^2 f
    \pmod{(\m^{[2^{e+1}]},4)} \\
    &\equiv (xyz)^{2^e}f^{2^{e-1}} \cdots f^2 f
    \pmod{(\m^{[2^{e+1}]},4)},
\end{align*}
where the last congruence follows from
\[
\bigl((x^3z^2)^{2^{e-1}}+(xy^3z^2)^{2^{e-1}}+(xyz^3)^{2^{e-1}}\bigr)f^{2^{e-1}} \in (\m^{[2^{e+1}]},2),
\]
we obtain
\begin{equation}\label{eq:e+1}
    f^{2^{e+1}-1} \equiv (xyz)^{2^{e+1}-1} \pmod{(\m^{[2^{e+1}]},4)}.
\end{equation}
Furthermore, we have
\begin{equation}\label{eq:e}
    f^{2^{e}-1} \equiv (xyz)^{2^{e}-1} \pmod{(\m^{[2^{e}]},2)}.
\end{equation}
Therefore,
\[
u^e(\phi^e_*(c^3g_0)) \in (\m^{[2]},4)
\]
by \eqref{eq:e+1}, and
\[
u^e(\phi^e_*(pc^3g_1))
=pu^{e-1}\bigl(\phi^{e-1}_*(x^6u(\phi_*(g_1)))\bigr)
\in (\m^{[2^{e}]},4)
\]
by \eqref{eq:e}.
The case $(2,E^4_8)$ is also not $2$-quasi-$F$-regular by the same argument.

Next, we prove (3).
We first treat the case of type $D_{2n}^r$.
Let
\[
f = x^2y+xy^n+xy^{n-r}z+z^2\in R.
\]
Let $J_e:=u^{e-1}(F_*^{e-1}f^{p^{e-1} - 1} R)$ be an ideal of $R$.
Then $I_e=f^{p-1}J_e$ for every $e \geq 1$.

The computation of $\sht(R/f)$ follows from \cite{kty}.
Moreover, the case $n-r=1$ follows from the classical Fedder criterion.
In what follows, we fix $n$, $r$, and an integer $e \geq 2$ such that $n-r>1$.

First, we compute $\sht^e(R/f)$.
We denote $\log_2 (n-r) +2$ (resp.\ the right-hand side of \eqref{eqn:Dnhte}) by $h_e$
if $n-r$ is a power of $2$ (resp.\ if $n-r$ is not a power of $2$).
We prove $\sht^e(R/f) = h_e$.

\noindent{\bf Proof of $\sht^e(R/f) \geq h_e$.}
Note that $h_e$ is equal to $\lceil\log_2 (n-r)\rceil +1$ or $\lceil\log_2 (n-r)\rceil +2$.
Since $\sht^e \geq \sht^1 = \lceil \log_2 (n-r) \rceil+1$,
we may assume that $h_e = \lceil \log_2 (n-r)\rceil+2$, that is,
either $n-r$ is a power of $2$ or $e \geq e_0$ holds.
Suppose, to the contrary, that $R/f$ is quasi-$F^e$-split at $h_e-1$.
By \Cref{cor:non-qF^es condition}, we have
\[
\theta^{h_e-2} (F^{h_e-2}_*I^e_1) \nsubseteq \m^{[p]}.
\]
Therefore, it suffices to show
\[
f\Delta_1 (f)^{1 + 2 + \cdots 2^{h_e-3}} J_e\subset \m^{[2^{h_e-1}]}.
\]
Note that $h_e \geq 3$.
Each term of $f\Delta_1 (f)^{1 + 2 + \cdots 2^{h_e-3}}$ is obtained by multiplying $2^{h_e-1}-1$ terms of $f$, with repetitions allowed.
Hence each term can be written as
\[
(x^2y)^A(xy^n)^B(xy^{n-r}z)^C(z^2)^D,
\]
where $A,B,C,D$ are non-negative integers satisfying $A+B+C+D=2^{h_e-1}-1$.
If such a monomial is not contained in $\m^{[2^{h_e-1}]}$, then we have
\begin{equation}
\label{eqn:ABCDinequality}
2A+C \leq 2^{h_e-1}-1,\quad
A+nB+(n-r)C \leq 2^{h_e-1}-1,\quad
B+C+2D \leq 2^{h_e-1}-1.
\end{equation}
Since $h_e = \lceil \log_2(n-r) \rceil+2$, we have
\begin{equation}
\label{eqn:h_einequality}
2(n-r) \leq 2^{h_e-1} <4(n-r).
\end{equation}
By the second inequality of \eqref{eqn:ABCDinequality} and \eqref{eqn:h_einequality}, we have $B+C \leq 3$.

Suppose that $2 \leq B+C$.
Then, by the third inequality of \eqref{eqn:ABCDinequality}, we have
\[
D\leq 2^{h_e-2}-2.
\]
Moreover, by the second inequality of \eqref{eqn:ABCDinequality} and \eqref{eqn:h_einequality}, we obtain
\[
A \leq 2^{h_e-1}-1 - 2 (n-r) < 2^{h_e-2}-1.
\]
Therefore,
\[
A+B+C+D \leq (2^{h_e-2}-2) + 3 + (2^{h_e-2}-2) = 2^{h_e-1} -1.
\]
Since equality holds, we have $A = 2^{h_e-2}-2$, $B+C=3$, and $D = 2^{h_e-2}-2$.
By the second inequality of \eqref{eqn:ABCDinequality} and \eqref{eqn:h_einequality} again, we have
\[
2^{h_{e}-3} (B+C) < (n-r)(B+C) \leq 2^{h_{e}-2}+1,
\]
which implies $B+C < 3$, a contradiction.
Hence we must have $B+C=1$.

In this case, by \eqref{eqn:ABCDinequality}, we have
\[
A \leq 2^{h_e-2}-1,\quad D \leq 2^{h_e-2}-1.
\]
As before, equality must hold in both inequalities, and therefore we have two possibilities:
\[
(A,B,C,D)=(2^{h_e-2}-1,1,0,2^{h_e-2}-1),\quad
(2^{h_e-2}-1,0,1,2^{h_e-2}-1).
\]
In the first case, by the second inequality of \eqref{eqn:ABCDinequality}, we have
$n \leq 2^{h_e-2}= 2^{\lceil \log_2 (n-r)\rceil}$, which implies
$\lceil \log_2 (n-r) \rceil = \lceil \log_2 n \rceil$.
Since $h_e$ is defined by the right-hand side of \eqref{eqn:Dnhte} in this case, this does not occur.
Therefore, we must have
\[
(A,B,C,D) =(2^{h_e-2}-1,0,1,2^{h_e-2}-1),
\]
and the corresponding term is
\[
x^{2A+C}y^{A+nB+(n-r)C}z^{B+C+2D}
=
x^{2^{h_e-1}-1}y^{2^{h_e-2}-1+(n-r)} z^{2^{h_e-1}-1}.
\]
Thus, it suffices to show
\[
J_e \subset (x,y^{2^{h_e-2}-(n-r) +1},z)
= (x, y^{2^{\lceil \log_2 (n-r) \rceil}-(n-r) +1},z).
\]
To this end, we prove the following lemma.

\begin{lem}
\label{lem:computationJm}
For any $n$, $r$, and $m \geq 2$, we have
\begin{equation}
\label{eqn:Jm}
J_m = (x,y^{\alpha_m}, y^{\lfloor \frac{n}{2} \rfloor}, z),
\end{equation}
where $\alpha_m$ is defined by $\alpha_2 = \lfloor \frac{n-r}{2} \rfloor$ and
\[
\alpha_{m+1} = \lfloor \frac{n-r + \alpha_m}{2} \rfloor.
\]
\end{lem}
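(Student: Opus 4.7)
The plan is to prove the lemma by induction on $m$, making essential use of a cleaner recursive description of $J_m$.

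First, I will establish the recursion
\[
J_{m+1} \;=\; u\!\left(F_*\!\left(f J_m\right)\right) \qquad (m\geq 1).
\]
This follows from splitting $f^{2^m-1} = f^{2^{m-1}}\cdot f^{2^{m-1}-1}$, applying the identity $F_*^{m-1}(f^{2^{m-1}} r) = f\cdot F_*^{m-1}(r)$ (using $p=2$ and the twisted $R$-module structure on $F_*^{m-1}R$), and then pulling $f$ through the $R$-linear map $u^{m-1}$. With this, the problem becomes a pure monomial computation.

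The base case $m=2$ is computed directly. Writing $f = x^2y + xy^n + xy^{n-r}z + z^2$, I multiply each term by an arbitrary monomial $x^Ay^Bz^C$ and record when the result lies in the image of $u\colon F_*R\to R$, i.e.\ has all three exponents odd; in that case the image is obtained by halving. The four terms of $f$ correspond to four different parity classes in $(A,B,C)$, and minimizing over $A,B,C$ yields exactly the four generators $x,\ y^{\lfloor n/2\rfloor},\ y^{\lfloor(n-r)/2\rfloor},\ z$, which is the claimed $J_2$ with $\alpha_2=\lfloor(n-r)/2\rfloor$.

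For the inductive step I assume $J_m=(x,y^{\alpha_m},y^{\lfloor n/2\rfloor},z)$ and compute
\[
J_{m+1} \;=\; u\!\left(F_*(fxR)\right)+u\!\left(F_*(fy^{\alpha_m}R)\right)+u\!\left(F_*(fy^{\lfloor n/2\rfloor}R)\right)+u\!\left(F_*(fzR)\right).
\]
Each summand is handled as in the base case: for each of the four terms of $f$ times each of the four generators of $J_m$, I determine the minimal monomial in $u(F_*(\cdot))$. The $fx$ and $fy^{\lfloor n/2\rfloor}$ pieces only produce elements already in $(x,y^{\lfloor n/2\rfloor},z)$; the $fz$ piece contributes $z$ and $y^{\lfloor n/2\rfloor}$; and the decisive contribution comes from $fy^{\alpha_m}$, specifically from the sub-term $xy^{\alpha_m+n-r}z$, which under an optimally chosen $x^Ay^Bz^C$ yields exactly $y^{\lfloor(\alpha_m+n-r)/2\rfloor}=y^{\alpha_{m+1}}$. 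All other contributions either reproduce $x$, $z$, $y^{\lfloor n/2\rfloor}$, or give $y^k$ with $k\geq \min\{\alpha_{m+1},\lfloor n/2\rfloor\}$ and hence lie in the claimed ideal.

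The main obstacle is bookkeeping rather than ideas: I have to verify, for each of the sixteen products (four terms of $f$ times four generators of $J_m$), that the minimum is attained by the claimed generator and that no strictly smaller $y$-power in $J_{m+1}$ can appear. In particular, ruling out an unwanted generator $y^a$ with $a<\alpha_{m+1}$ requires checking that every choice of parity for $(A,B,C)$ across all four terms forces $a\ge\alpha_{m+1}$, which uses the specific exponents $2,1,n,n-r,2$ appearing in $f$ and the hypothesis that $y^{\alpha_m}$ (not a smaller power) is the previous generator.
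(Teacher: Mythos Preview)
Your approach is essentially the paper's: both establish the recursion $J_{m+1}=u(F_*(fJ_m))$ from $f^{2^m-1}=f^{2^{m-1}}\cdot f^{2^{m-1}-1}$, compute $J_2$ directly, and then induct by analyzing the contribution of each generator of $J_m$.

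One correction is needed. Your claim that ``the $fy^{\lfloor n/2\rfloor}$ pieces only produce elements already in $(x,y^{\lfloor n/2\rfloor},z)$'' is false in general: the sub-term $xy^{n-r}z\cdot y^{\lfloor n/2\rfloor}$ contributes $y^{\lfloor(n-r+\lfloor n/2\rfloor)/2\rfloor}$, which is strictly smaller than $y^{\lfloor n/2\rfloor}$ whenever $n-r<\lfloor n/2\rfloor$ (e.g.\ $n=10$, $r=8$). Your final catch-all (``$y^k$ with $k\ge\min\{\alpha_{m+1},\lfloor n/2\rfloor\}$'') \emph{is} correct for this term, but proving it requires the case distinction $\alpha_m\le\lfloor n/2\rfloor$ versus $\alpha_m>\lfloor n/2\rfloor$: in the first case $y^{\lfloor n/2\rfloor}$ is a redundant generator of $J_m$ and the computation is unnecessary; in the second case you need the easy auxiliary facts $\alpha_m\le n-r$ and $\alpha_{m+1}\ge\alpha_m$ (so that $n-r>\lfloor n/2\rfloor$ and $\min\{\alpha_{m+1},\lfloor n/2\rfloor\}=\lfloor n/2\rfloor$). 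The paper makes exactly this case split explicit and records the monotonicity of $\alpha_m$ at the outset; you should do the same rather than rely on the loose summary sentence.
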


\begin{proof}
Note that $\alpha_m \leq  n-r$ for any $m$, and hence
\[
\alpha_{m+1} \geq \Bigl\lfloor \frac{2\alpha_m}{2} \Bigr\rfloor = \alpha_m.
\]
Recall that $J_1 = (1)$ and $J_{m+1} = u (F_*(fJ_m))$.
By a straightforward computation, we have
$J_2 = (x,y^{\lfloor \frac{n-r}{2}\rfloor}, z)$, so \eqref{eqn:Jm} holds for $m=2$.
Assume that \eqref{eqn:Jm} holds for $m$, and we prove it for $m+1$.

First, note that
\[
u(F_* (x f)) = (x, z), \quad u(F_* (zf)) = (x, y^{\lfloor \frac{n}{2} \rfloor}, z).
\]
Moreover, we obtain
\[
u(F_* (y^{\alpha_m} f)) + (x,z) = (x, y^{\lfloor \frac{n-r + \alpha_m }{2} \rfloor}, z).
\]
Therefore, \eqref{eqn:Jm} for $m+1$ holds when $\alpha_m \leq \lfloor \frac{n}{2} \rfloor$.

Suppose that $\lfloor \frac{n}{2} \rfloor < \alpha_m$.
Then there exists an integer $m_0 \geq 3$ such that
$\alpha_{m_0-1} \leq \lfloor \frac{n}{2} \rfloor$ and
$\lfloor \frac{n}{2}  \rfloor < \alpha_{m_0}$.
Since
\[
u(F_* (y^{\lfloor \frac{n}{2} \rfloor} f)) + (x,z)
=
(x, y^{\lfloor \frac{n-r + \lfloor \frac{n}{2} \rfloor }{2} \rfloor}, z)
\]
and
\[
\lfloor \frac{n-r + \lfloor \frac{n}{2} \rfloor }{2} \rfloor
\geq
\lfloor \frac{n-r + \alpha_{m_0-1} }{2} \rfloor
= \alpha_{m_0}
> \lfloor \frac{n}{2} \rfloor,
\]
we obtain
\[
J_{m+1} = (x, y^{\lfloor \frac{n}{2} \rfloor}, z),
\]
as desired.
\end{proof}

By Lemma~\ref{lem:computationJm}, to prove $\sht^e(R/f) \geq h_e$, it suffices to show
\begin{equation}
\label{eqn:firstineq}
\alpha_e \geq 2^{\lceil \log_2 (n-r) \rceil}- (n-r) +1
\end{equation}
and
\begin{equation}
\label{eqn:secondineq}
\lfloor \frac{n}{2} \rfloor \geq 2^{\lceil \log_2 (n-r) \rceil}- (n-r) +1.
\end{equation}
If $n-r > 1$ is a power of $2$, these inequalities are immediate.
Thus, we may assume that $n-r >1$ is not a power of $2$.

Since we assume $h_e = \lceil \log_2 (n-r) \rceil +2$, we have $e\geq e_0$ and
\begin{equation}
\label{eqn:log2nineq}
\lceil \log_2 n \rceil \geq \lceil \log_2 (n-r) \rceil +1.
\end{equation}
Since
\[
2^{\lceil \log_2 (n-r) \rceil -1} < n-r \leq 2^{\lceil \log_2 (n-r) \rceil}
\]
and $2^{\lceil \log_2 (n-r) \rceil} < n$ (which follows from \eqref{eqn:log2nineq}), we obtain
\[
2^{\lceil \log_2 (n-r) \rceil}- (n-r) +1
<
2^{\lceil \log_2 (n-r) \rceil} - 2^{\lceil \log_2 (n-r) \rceil -1} +1
< \frac{n}{2}+1.
\]
This implies \eqref{eqn:secondineq}.
Therefore, it remains to prove \eqref{eqn:firstineq}.

Since $e \geq e_0$, Remark~\ref{rmk:e0} implies
\[
2^{\lceil \log_2 (n-r) \rceil + e-1} - (2^e-1) (n-r) + (2^{e-1}-1) <0.
\]
Hence
\[
2^{\lceil \log_2 (n-r) \rceil} - (n-r) +1 <
\Bigl(\frac{2^{e-1}-1}{2^{e-1}}\Bigr) (n-r) + \frac{1}{2^{e-1}}.
\]

\begin{lem}
\label{lem:computationalpha_m}
For any $m\geq2$, write $n-r = 2^{m-1}M_m + N_m$, where $0 \leq N_m \leq 2^{m-1}-1$.
Then
\begin{equation}
\label{eqn:alpha_m}
\alpha_m = (2^{m-1}-1)M_m +
\begin{cases}
0  &\textup{if } 0 \leq  N_m \leq 1, \\
N_m-1 &\textup{otherwise}.
\end{cases}
\end{equation}
\end{lem}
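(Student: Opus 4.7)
The plan is to prove the lemma by induction on $m$, tracking how the pair $(M_m,N_m)$ evolves as $m$ increases and how the formula for $\alpha_m$ interacts with the recursion $\alpha_{m+1}=\lfloor \tfrac{n-r+\alpha_m}{2}\rfloor$.

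\textbf{Base case.} For $m=2$, writing $n-r = 2M_2+N_2$ with $N_2\in\{0,1\}$, I get $\alpha_2=\lfloor\tfrac{n-r}{2}\rfloor = M_2 = (2^{2-1}-1)M_2$, which matches the formula since the top branch ($N_2\le 1$) applies.

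\textbf{Inductive step.} Assuming the formula for $m$, I first relate $(M_{m+1},N_{m+1})$ to $(M_m,N_m)$ via the parity of $M_m$: if $M_m=2M_{m+1}$ (even) then $N_{m+1}=N_m$, and if $M_m=2M_{m+1}+1$ (odd) then $N_{m+1}=N_m+2^{m-1}$. Note that in the odd case $N_{m+1}\ge 2^{m-1}\ge 2$, so the ``otherwise'' branch of the formula applies at level $m+1$.

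I then split into four subcases (parity of $M_m$ times whether $N_m\le 1$ or $N_m\ge 2$). In each subcase I substitute the inductive expression for $\alpha_m$ into
\[
\alpha_{m+1} = \Bigl\lfloor \frac{2^{m-1}M_m + N_m + \alpha_m}{2}\Bigr\rfloor,
\]
use $(2^m-1)M_m = (2^{m+1}-2)M_{m+1} + (2^m-1)[M_m\text{ odd}]$ to re-express things in terms of $M_{m+1}$, and then evaluate the floor. For instance, when $M_m=2M_{m+1}+1$ and $2\le N_m\le 2^{m-1}-1$, the numerator becomes $(2^{m+1}-2)M_{m+1} + 2^m + 2N_m - 2$, whose half is the integer $(2^m-1)M_{m+1} + 2^{m-1} + N_m - 1$, matching the predicted value $(2^m-1)M_{m+1} + N_{m+1}-1$ since $N_{m+1}=N_m+2^{m-1}$. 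The other three subcases are analogous; the two with $N_m\le 1$ involve a half-integer that is eliminated by the floor.

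\textbf{Main obstacle.} There is no conceptual obstacle; the difficulty is purely bookkeeping, namely keeping track of (i) the parity-driven branching of the division-with-remainder transition $(M_m,N_m)\mapsto(M_{m+1},N_{m+1})$, and (ii) the threshold $N_m\le 1$ vs.\ $N_m\ge 2$ in the inductive formula, and verifying that the two thresholds interact correctly so that the floor lands on the predicted integer in all four subcases.
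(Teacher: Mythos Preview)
Your proposal is correct and essentially identical to the paper's own proof. Both argue by induction on $m$, split into the same four subcases, and verify the recursion $\alpha_{m+1}=\lfloor(n-r+\alpha_m)/2\rfloor$ in each; the only cosmetic difference is that the paper indexes the four cases by the range of $N_{m+1}$ (namely $[0,1]$, $[2,2^{m-1}-1]$, $[2^{m-1},2^{m-1}+1]$, $[2^{m-1}+2,2^m-1]$) rather than by the parity of $M_m$ together with $N_m\le 1$ versus $N_m\ge 2$, which is an equivalent parametrization.
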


\begin{proof}
We prove \eqref{eqn:alpha_m} by induction on $m$.
Since $\alpha_2 = \lfloor \frac{n-r}{2} \rfloor = M_2$, the case $m=2$ holds.
Assume that \eqref{eqn:alpha_m} holds for $m$.

If $0 \leq N_{m+1} \leq 2^{m-1} -1$ (resp.\ $2^{m-1} \leq N_{m+1} \leq 2^{m}-1$), then
$M_m = 2M_{m+1}$ (resp.\ $M_m = 2M_{m+1} +1$) and
$N_m = N_{m+1}$ (resp.\ $N_m = N_{m+1} - 2^{m-1}$).
Therefore, by \eqref{eqn:alpha_m} for $m$, we have
\[
\alpha_m =
\begin{cases}
(2^{m}-2)M_{m+1}  &\textup{if } 0 \leq N_{m+1} \leq 1, \\
(2^m-2)M_{m+1} + N_{m+1}-1  &\textup{if } 2 \leq  N_{m+1} \leq 2^{m-1}-1,\\
(2^{m-1}-1)(2M_{m+1}+1)  &\textup{if } 2^{m-1} \leq  N_{m+1} \leq  2^{m-1}+1, \\
(2^{m-1}-1)(2M_{m+1}+1)+ (N_{m+1}-2^{m-1}-1)
&\textup{if } 2^{m-1}+2 \leq  N_{m+1} \leq  2^{m}-1.
\end{cases}
\]
Combining this with $\alpha_{m+1} = \lfloor \frac{n-r + \alpha_m}{2} \rfloor$, we obtain \eqref{eqn:alpha_m} for $m+1$.
\end{proof}

Since $n-r = 2^{e-1} M_e + N_e$, we have
\[
2^{\lceil \log_2 (n-r) \rceil} - (n-r) +1 <
(2^{e-1}-1)M_e
+
\Bigl(\frac{2^{e-1}-1}{2^{e-1}}\Bigr) N_e + \frac{1}{2^{e-1}}.
\]
By Lemma~\ref{lem:computationalpha_m}, this implies \eqref{eqn:firstineq}.
This proves $\sht^e(R/f) \geq h_e$.

\noindent{\bf Proof of $\sht^e(R/f) \leq h_e$.}
In what follows, we also denote the polynomial
\[
x^2y+xy^n+xy^{n-r}z+z^2 \in W(k)[x,y,z]
\]
by $f$, and we write $f_m:=f^{p^m-1}$.

\noindent{\bf Case (1).}
Assume that one of the following holds:
\begin{itemize}
    \item $n-r\geq 2$ is a power of $2$;
    \item $n-r\geq 2$ is not a power of $2$ and $e < e_0$.
\end{itemize}
In this case, $h_e = \lfloor \log_2 (n-r) \rfloor +2$.
Set
\begin{equation}
\label{eqn:Case1g}
g:=  f_{e+h_e-1}
y^{2^{\lfloor \log_2(n-r) \rfloor +e} - (2^e-1)(n-r) + (2^{e-1} -1)}.
\end{equation}
Note that $g$ is well-defined by the assumption.
We show that $g$ satisfies conditions (D1) and (D3) in Theorem~\ref{thm:criterion}, and hence $R/f$ is $h_e$-quasi-$F^e$-split by \Cref{cor:non-qF^es condition}~(1).

By Lemma~\ref{lem:reduction lemma}, to prove (D1), it suffices to show that
\[
u^{e+s-1}\bigl(\phi^{e+s-1}_*(f_{e+s-1}y^{2^{\lfloor \log_2(n-r) \rfloor +e} - (2^e-1)(n-r) + (2^{e-1} -1)})\bigr) \in (p^s)
\]
for $1 \leq s \leq h_e-1$.
We compute the terms of
\[
f_{e+s-1}y^{2^{\lfloor \log_2(n-r) \rfloor +e} - (2^e-1)(n-r) + (2^{e-1} -1)}
\]
whose image under $u^{e+s-1}$ is non-zero.
Ignoring coefficients, such a term can be written as
\begin{equation}
\label{eqn:termcase1}
(x^2y)^A(xy^n)^B(xy^{n-r}z)^C(z^2)^D
y^{2^{\lfloor \log_2(n-r) \rfloor +e} - (2^e-1)(n-r) + (2^{e-1} -1)},
\end{equation}
where $A,B,C,D$ are non-negative integers with $A+B+C+D=2^{e+s-1}-1$ and
\[
2A+B+C \equiv A+nB+(n-r)C-(2^e-1)(n-r)+(2^{e-1}-1) \equiv C+2D \equiv -1 \mod 2^{e+s-1}.
\]
Modulo $2^{e+s-1}$, the congruence $A+B+C+D\equiv 2A+B+C$ implies $D \equiv A$.
The congruence $C+2D \equiv -1$ implies $D\equiv -1 -2A$, and
$2A+B+C \equiv -1$ implies $B \equiv 0$.
Moreover, since
\[
A+nB+(n-r)C-(2^e-1)(n-r)+(2^{e-1}-1)
\equiv (1-2(n-r))(A+2^{e-1})-1 \equiv-1 \mod 2^{e+s-1},
\]
we obtain
\[
A \equiv -2^{e-1} \mod 2^{e+s-1}.
\]
Hence $A = 2^{e+s-1} -2^{e-1}$, and also $C= 2^{e+s-1} -2^{e-1}$ since $A \equiv C \mod 2^{e+s-1}$.
Then
\[
A+C= 2^{e+s}-2^e >2^{e+s-1}-1 = A+B+C+D,
\]
a contradiction.
Therefore, there is no such term, and (D1) follows.

Next, to verify (D3), we compute $u^{e+h_e-1}(\phi^{e+h_e-1}_*g)$.
By the same argument as above, ignoring coefficients, any term of $g$ whose image under $u^{e+h_e-1}$ is non-zero can be written as \eqref{eqn:termcase1} with
$A+B+C+D=2^{e+h_e-1}-1$ and
\begin{gather*}
2A+B+C\equiv C+2D \equiv -1 \mod 2^{e+h_e-1}, \\
A+nB+(n-r)C+ 2^{e+h_e-2}-(2^e-1) (n-r)+(2^{e-1}-1) \equiv -1 \mod 2^{e+h_e-1}.
\end{gather*}
As before, we obtain $A \equiv D$, $D \equiv -1-2A$, and $B \equiv 0 \mod 2^{e+h_e-1}$.
Moreover,
\begin{align*}
& A+nB+(n-r)C + 2^{e+h_e-2}-(2^e-1)(n-r)+(2^{e-1}-1) \\
&\equiv (1-2(n-r))(A+2^{e-1}) +2^{e+h_e-2} -1 \\
&\equiv -1 \mod 2^{e+h_e-1}.
\end{align*}
and hence
\[
A \equiv -2^{e-1} \mod 2^{e+h_e-2}.
\]
Since $A=D$ and $A+B+C+D = 2^{e+h_e-1}- 1< 2^{e+h_e}-2^e$, we obtain
$A= 2^{e+h_e-2}-2^{e-1} =D$.
Therefore,
\[
(A,B,C,D) = (2^{e+h_e-2}-2^{e-1}, 0, 2^e-1,2^{e+h_e-2}-2^{e-1}).
\]
Furthermore, the coefficient of this term is
\[
\binom{2^{e+h_e-1}-1}{A\ C\ D}
=
\binom{2^{e+h_e-1}-1}{2^e-1}
\binom{2^{e}(2^{h_e-1}-1)}{2^{e-1}(2^{h_e-1}-1)}.
\]
By Kummer's theorem, the $2$-order of this coefficient is $h_e-1$.
Therefore,
\[
2^{-(h_e-1)}u^{e+h_e-1}(\phi^{e+h_e-1}_*g)
\notin (\m,2),
\]
as desired.

\noindent{\bf Case (2).}
Put $h' = \lceil \log_2 n \rceil+1$, and define
\begin{equation}
\label{eqn:Case2g}
g:= f_{e+h' -1} x^{2^{e-1}-1} y^{(2^{\lceil \log_2 n \rceil}-n)2^{e-1}+2^{e-1}-1}z.
\end{equation}
If $g$ satisfies (D1) and (D3) (for $h'$) in Theorem~\ref{thm:criterion}, then
$\sht \leq h'$ by \Cref{cor:non-qF^es condition}~(1).
In particular, we obtain $\sht(R/f) \leq h_e$ in the case where
$\lceil \log_2n \rceil = \lceil \log_2 (n-r) \rceil$.

To verify (D1), it suffices to show that the image of
\begin{equation}
\label{eqn:case2fs}
f_{e+s-1} x^{2^{e-1}-1} y^{(2^{\lceil \log_2 n \rceil}-n)2^{e-1}+2^{e-1}-1}z
\end{equation}
under $u^{e+s-1}$ is contained in $(p^s)$ for every $1\leq s \leq h'-1$.
Ignoring coefficients, the terms in \eqref{eqn:case2fs} whose image under $u^{e+s-1}$ is non-zero can be written as
\begin{equation}
\label{eqn:termcase2}
(x^2y)^A(xy^n)^B(xy^{n-r}z)^C(z^2)^D
x^{2^{e-1}-1} y^{(2^{\lceil \log_2 n \rceil}-n)2^{e-1}+2^{e-1}-1}z,
\end{equation}
where $A,B,C,D$ are non-negative integers with $A+B+C+D = 2^{e+s-1}-1$ and
\begin{eqnarray*}
&&2A+B+C+2^{e-1}-1 \\
&\equiv& A+nB+(n-r)C-n2^{e-1}+2^{e-1}-1 \\
&\equiv& C+2D+1 \equiv -1 \mod 2^{e+s-1}.
\end{eqnarray*}
As in Case~(1), we obtain
$D\equiv A+2^{e-1}-1$, $C\equiv  -2A-2^e$, and $B\equiv2^{e-1} \mod 2^{e+s-1}$.
Moreover, since
\[
A+nB+(n-r)C-n2^{e-1}+2^{e-1}-1
\equiv (1-2(n-r))(A+2^{e-1}) -1 \equiv -1,
\]
we obtain $A\equiv -2^{e-1} \mod 2^{e+s-1}$.
Hence $A= 2^{e+s-1}-2^{e-1}$ and $D=2^{e+s-1}-1$, which contradicts
\[
A+B+C+D = 2^{e+s-1}-1 < 2^{e+s}-2^{e-1}-1.
\]
Therefore, there is no such term, and (D1) holds.

Next, we verify (D3).
As in Case~(1), the terms in $g$ whose image under $u^{e+h'-1}$ is non-zero can be written as \eqref{eqn:termcase2} with $A+B+C+D=2^{e+h'-1}-1$ and
\begin{gather*}
2A+B+C+2^{e-1}-1 \equiv C+2D+1 \equiv -1 \mod 2^{e+h'-1},\\
A+nB+(n-r)C+2^{e+h'-2}-n2^{e-1}+2^{e-1}-1 \equiv -1 \mod 2^{e+h'-1}.
\end{gather*}
As before, we obtain $D\equiv A+2^{e-1}-1$, $C\equiv  -2A-2^e$, and $B\equiv2^{e-1} \mod 2^{e+h'-1}$.
Moreover, we have $A \equiv -2^{e-1} \mod 2^{e+h'-2}$.
Since $D=A+2^{e-1}-1$ and
\[
A+B+C+D=2^{e+h'-1}-1<2^{e+h'}-2^{e-1}-1,
\]
we obtain $A=2^{e+h'-2}-2^{e-1}$.
Therefore,
\[
(A,B,C,D) = (2^{e+h'-2}-2^{e-1}, 2^{e-1},0, 2^{e+h'-2}-1).
\]
The coefficient of this term is
\[
\binom{2^{e+h'-1}-1}{A\ C\ D}
=
\binom{2^{e+h'-1}-1}{2^{e+h'-2}-1}
\binom{2^{e+h'-2}}{2^{e-1}}.
\]
By Kummer's theorem, the $2$-order of this coefficient is $h'-1$.
This proves (D3).

\noindent{\bf Case (3).}
Put $h'':= \lceil \log_2(n-r) \rceil+2$, and define
\begin{equation}
\label{eqn:Case3g}
g:= f_{e+h''-1}
y^{2^{\lfloor \log_2(n-r) \rfloor +e+1} - (2^e-1)(n-r) + (2^{e-1} -1)}.
\end{equation}
In this case, conditions (D1) and (D3) (for $h''$) in Theorem~\ref{thm:criterion}
can be verified in the same way as in Case~(1).
Therefore, $\sht(R/f) \leq h''$ by \Cref{cor:non-qF^es condition}~(1), and hence
$\sht(R/f) \leq h_e$ in the case where $n-r\geq 2$ is not a power of $2$ and $e \geq e_0$.

By Cases~(1)--(3), we obtain $\sht^e(R/f) \leq h_e$ in all cases.
Hence $\sht^e(R/f)=h_e$.

\noindent{\bf Proof of $\sht^{\infty}(R/f) = \sht^{\mathrm{reg}}(R/f)$.}
Take $e \gg 0$ such that $\sht^{e}(R/f) = \sht^{\infty}(R/f)$.
It suffices to show $\sht^{\mathrm{reg}}(R/f) = \sht^{\infty}(R/f)$.
Let $\tau \subset R/f$ be the test ideal.
We have
\[
\partial_x f = y^n +y^{n-r}z,\quad
\partial_y f = x^2 + nxy^{n-1} + (n-r)xy^{n-r-1}z,\quad
\partial_z f = xy^{n-r}.
\]
Let
\[
u \colon F_*R \to R
\]
be the $R$-module homomorphism given by
\[
x^i y^j z^k \mapsto
\begin{cases}
x^{\frac{i-1}{2}}y^{\frac{j-1}{2}}z^{\frac{k-1}{2}} \quad & \textup{if $i,j,k$ are odd,} \\
0 &\textup{otherwise.}
\end{cases}
\]
Then $- \mapsto u (F_*(f-))$ defines a generator of
\[
\Hom_{(R/f)_{\m}} (F_* ((R/f)_{\m}), (R/f)_{\m}).
\]
Set $\alpha = 1$ (resp.\ $0$) if $n-r$ is even (resp.\ odd).
Then
\[
u(F_*(fxy^{n-r} y^{\alpha}z)) = y^{\lfloor \frac{n-r}{2} \rfloor}z \in \tau.
\]
Since $\partial_x f \in \tau$, we have $y^n \in \tau$.
Let $c:=y^{4n}$.

First, suppose that $n-r \geq 2$ is a power of $2$.
Then $g$ defined in \eqref{eqn:Case1g} satisfies
\[
\frac{g}{c^{2^{h_e}-1}} \in (f_{e+h_e-1}) \subset W(k)[x,y,z]
\]
for $e \gg 0$.
Note that $h_e = \sht^e(R/f)$.
Therefore, $\frac{g}{c^{2^{h_e}-1}}$ and $c$ satisfy the condition in \Cref{thm:criterion}~(2), and hence
\[
\sht^{\mathrm{reg}}(R/f) = \sht^{e}(R/f) = \sht^{\infty}(R/f).
\]
When $n-r$ is not a power of $2$, the same argument works by using $g$ defined in \eqref{eqn:Case3g}.
This completes the proof for type $D_{2n}^r$.

The proof for type $D_{2n+1}^r$ is similar.
Here we only indicate the modifications.
As before, we may assume $n-r \geq 2$, and we set
\[
f := z^2 + x^2y + y^nz +xy^{n-r}z.
\]
The proof of $\sht^e(R/f) \geq h_e$ is the same.
For the proof of $\sht^e(R/f) \leq h_e$, in Cases~(1) and (3), the same definitions of $g$
\eqref{eqn:Case1g} and \eqref{eqn:Case3g} work.
In Case~(2), instead of \eqref{eqn:Case2g}, we use
\[
g = f_{e+h'-1} xy^{(2^{\lceil \log_{2}n \rceil}-n)2^{e-1}}z^{2^{e-1}-1}.
\]
Then, by a similar computation, we obtain $\sht^e(R/f) \leq h'$.
For the computation of the test ideal, we have
\[
u(F_*(f (\partial_{x}f)xy^{1-\alpha})) = xy^{\lfloor \frac{n-r+1}{2} \rfloor} \in \tau.
\]
Since $\partial_z f = y^n + x y^{n-r} \in \tau$, we obtain $y^n \in \tau$.
Hence $\sht^{\infty}(R/f) = \sht^{\mathrm{reg}}(R/f)$ follows in the same way.
This completes the proof of (3).

Finally, we prove (2).
Since $\sht = \lceil \log_{2} n \rceil +1$ by \cite[Table~1]{kty}, it suffices to show
$\sht^{\reg} \leq \lceil \log_{2} n \rceil +1$.

First, consider type $D_{2n}^0$ and set
\[
f:=z^2 +x^2y +xy^n.
\]
Then, for any $e\geq 2$, the element
\begin{equation}
\label{eqn:GD0}
g:= z y^{2^{\lceil \log_2 n \rceil+e-1} -n} f^{2^{\lceil \log_2n \rceil +e} -1}
\end{equation}
satisfies conditions (D1) and (D3) in Theorem~\ref{thm:criterion} by the same argument as in (3).
Therefore, $\sht^{e}(R/f) \leq \lceil \log_{2} n \rceil +1$.
Moreover, since $\partial_x f = y^n \in \tau$, and $c := y^{4n}$ satisfies
\[
\frac{g}{c^{2^{ \lceil \log_2 n \rceil+1}-1} } \in (f_{e+ \lceil \log_2 n \rceil})
\]
for $e\gg0$, we see that $\frac{g}{c^{2^{ \lceil \log_2 n \rceil+1}-1} }$ and $c$
satisfy the condition in \Cref{thm:criterion}~(2).
Hence $\sht^{\reg}(R/f) \leq \lceil \log_{2} n \rceil +1$.

The proof for type $D_{2n+1}^0$ is the same, by using
\[
g:=x y^{2^{\lceil \log_2 n \rceil+e-1} -n} f^{2^{\lceil \log_2n \rceil +e} -1}
\]
for $f:=z^2 + x^2y + y^n z$ and $c = y^{4n}$.
This completes the proof.
\end{proof}

\begin{table}[!htbp]
\caption{heights of non-taut RDPs}
\centering
\begin{tabular}{|l|c|c|c|c|c|}
\hline
$p$ & type & $f$ & $\sht(R/f)$ &
$\sht^\infty(R/f)$ &
$\sht^{\mathrm{reg}}(R/f)$
\\
\hline
2 & $D_{2n}^{0}$ &$z^2 +x^{2}y+xy^n$
& $\lceil \log_2 n \rceil +1$
& $\lceil \log_2 n \rceil +1$
& $\lceil \log_2 n \rceil +1$
\\
\hline
2 & $D_{2n}^{r}$ &$z^2 +x^{2}y+xy^n+xy^{n-r}z$
& $\lceil \log_2 (n-r) \rceil +1$
& $(*)$
& $(*)$
\\
\hline
2 & $D_{2n+1}^{0}$ &$z^2+x^2y+y^nz$
& $\lceil \log_2 n \rceil +1$
& $\lceil \log_2 n \rceil +1$
& $\lceil \log_2 n \rceil +1$
\\
\hline
2 & $D_{2n+1}^{r}$ &$z^2+x^2y+y^nz+xy^{n-r}z$
& $\lceil \log_2 (n-r) \rceil +1$
& $(*)$
& $(*)$
\\
\hline
2 & $E_{6}^{0}$ &$z^2+x^3+y^2z$
& 2 & 2 & 2 \\
\hline
2 & $E_{6}^{1}$ &$z^2+x^3+y^2z+xyz$
& 1 & 1 & 2 \\
\hline
2 & $E_{7}^{0}$ &$z^2+x^3+xy^3$
& 4 & 4 & 4 \\
\hline
2 & $E_{7}^{1}$ &$z^2+x^3+xy^3+x^2yz$
& 3 & 4 & 4 \\
\hline
2 & $E_{7}^{2}$ &$z^2+x^3+xy^3+y^3z$
& 2 & 3 & 3 \\
\hline
2 & $E_{7}^{3}$ &$z^2+x^3+xy^3+xyz$
& 1 & 1 & 3 \\
\hline
2 & $E_{8}^{0}$ &$z^2+x^3+y^5$
& 4 & 4 & 4 \\
\hline
2 & $E_{8}^{1}$ &$z^2+x^3+y^5+xy^3z$
& 4 & 4 & 4 \\
\hline
2 & $E_{8}^{2}$ &$z^2+x^3+y^5+xy^2z$
& 3 & 4 & 4 \\
\hline
2 & $E_{8}^{3}$ &$z^2+x^3+y^5+y^3z$
& 2 & 4 & 4 \\
\hline
2 & $E_{8}^{4}$ &$z^2+x^3+y^5+xyz$
& 1 & 1 & 3 \\
\hline
3 & $E_{6}^{0}$ & $z^2+x^3+y^4$
& 2 & 2 & 2 \\
\hline
3 & $E_{6}^{1}$ &$z^2+x^3+y^4+x^2y^2$
& 1 & 1 & 2 \\
\hline
3 & $E_{7}^{0}$ & $z^2+x^3+xy^3$
& 2 & 2 & 2 \\
\hline
3 & $E_{7}^{1}$ &$z^2+x^3+xy^3+x^2y^2$
& 1 & 1 & 2 \\
\hline
3 & $E_{8}^{0}$ & $z^2+x^3+y^5$
& 3 & 3 & 3 \\
\hline
3 & $E_{8}^{1}$ & $z^2+x^3+y^5+x^2y^3$
& 2 & 3 & 3 \\
\hline
3 & $E_{8}^{2}$ & $z^2+x^3+y^5+x^2y^2$
& 1 & 1 & 2 \\
\hline
5 & $E_{8}^{0}$ &$z^2+x^3+y^5$
& 2 & 2 & 2 \\
\hline
5 & $E_{8}^{1}$ &$z^2+x^3+y^5+xy^4$
& 1 & 1 & 2 \\
\hline
\end{tabular}
\label{table:RDPs}
\end{table}

\begin{table}[ht]
\caption{}
\centering
\begin{tabular}{|c|c|c|c|c|c|c|c|}
\hline
$p$ & Type & $f$ & $\sht^{\mathrm{reg}}(R/f)$ & $e$ & $a$ & $c$ & $\tau(R/f)$ \\
\hline
2 & $E_6^{0}$ &
$z^2 + x^3 + y^2 z$ &
2 & 5 & $x^3 y$ & $x^4$ & $(x,y,z)$ \\
\hline
2 & $E_6^{1}$ &
$z^2 + x^3 + y^2 z + x y z$ &
2 & 5 & $x^3 y$ & $x^4$ & $(x,y,z)$ \\
\hline
2 & $E_7^{0}$ &
$z^2 + x^3 + x y^3$ &
4 & 7 & $x^{127} y^3 z$ & $y^4$ & $(x,y,z)$ \\
\hline
2 & $E_7^{1}$ &
$z^2 + x^3 + x y^3 + x^2 y z$ &
4 & 6 & $x^3 y^{31} z$ & $x^4$ & $(x,y,z)$ \\
\hline
2 & $E_7^{2}$ &
$z^2 + x^3 + x y^3 + y^3 z$ &
3 & 7 & $x^3 y^{31} z$ & $x^4$ & $(x,y,z)$ \\
\hline
2 & $E_7^{3}$ &
$z^2 + x^3 + x y^3 + x y z$ &
3 & 6 & $x^{16} y^4$ & $y^4$ & $(x,y,z)$ \\
\hline
2 & $E_8^{0}$ &
$z^2 + x^3 + y^5$ &
4 & 7 & $x^3 y^{63} z$ & $x^4$ & $(x,y^2,z)$ \\
\hline
2 & $E_8^{1}$ &
$z^2 + x^3 + y^5 + x y^3 z$ &
4 & 7 & $x^3 y^{63} z$ & $x^4$ & $(x,y^2,z)$ \\
\hline
2 & $E_8^{2}$ &
$z^2 + x^3 + y^5 + x y^2 z$ &
4 & 7 & $x^3 y^{63} z$ & $x^4$ & $(x,y^2,z)$ \\
\hline
2 & $E_8^{3}$ &
$z^2 + x^3 + y^5 + y^3 z$ &
4 & 7 & $x^3 y^{63} z$ & $x^4$ & $(x,y,z)$ \\
\hline
2 & $E_8^{4}$ &
$z^2 + x^3 + y^5 + x y z$ &
3 & 8 & $x^{31} y^3 z$ & $y^4$ & $(x,y,z)$ \\
\hline
3 & $E_6^{0}$ &
$z^2 + x^3 + y^4$ &
2 & 5 & $x^2 y^{48} z^{80}$ & $y^4$ & $(x,y,z)$ \\
\hline
3 & $E_6^{1}$ &
$z^2 + x^3 + y^4 + x^2 y^2$ &
2 & 6 & $x^{83} y^{48} z^{80}$ & $y^4$ & $(x,y,z)$ \\
\hline
3 & $E_7^{0}$ &
$z^2 + x^3 + x y^3$ &
2 & 5 & $y^2 z^{48}$ & $z^4$ & $(x,y,z)$ \\
\hline
3 & $E_7^{1}$ &
$z^2 + x^3 + x y^3 + x^2 y^2$ &
2 & 4 & $y^6 z^8$ & $y^4$ & $(x,y,z)$ \\
\hline
3 & $E_8^{0}$ &
$z^2 + x^3 + y^5$ &
3 & 5 & $x^2 y^{57} z^{80}$ & $y^4$ & $(x,y,z)$ \\
\hline
3 & $E_8^{1}$ &
$z^2 + x^3 + y^5 + x^2 y^3$ &
3 & 4 & $x^8 y^{35} z^8$ & $y^4$ & $(x,y,z)$ \\
\hline
3 & $E_8^{2}$ &
$z^2 + x^3 + y^5 + x^2 y^2$ &
2 & 6 & $x^2 y^{48} z^{80}$ & $y^4$ & $(x,y,z)$ \\
\hline
5 & $E_8^{0}$ &
$z^2 + x^3 + y^5$ &
2 & 4 & $x^{28} y^4 z^{124}$ & $x^4$ & $(x,y,z)$ \\
\hline
5 & $E_8^{1}$ &
$z^2 + x^3 + y^5 + x y^4$ &
2 & 5 & $x^{28} y^4 z^{124}$ & $x^4$ & $(x,y,z)$ \\
\hline
\end{tabular}
\label{table:RDPs-qfr}
\end{table}

\begin{remark}
Liedtke, Martin, and Matsumoto \cite{LMM}*{Proposition~6.2} study the structure of the top local cohomology of Witt rings of rational double points.
Although the quasi-$F^{\infty}$-split height is not computed explicitly in \cite{LMM},
their analysis allows one to determine it after a suitable argument.
More precisely, for a non-$F$-pure RDP $R$, one can deduce from their results that
\[
\sht^{\infty}(R)-1
=
\max\{\, l \in \Z_{\geq 1} \mid \text{$f^{(l)}$ is one of the generators listed in \cite{LMM}*{Table~3}} \,\}.
\]
We emphasize that this computation is obtained by an argument quite different from the one used in this paper.
\end{remark}

\section{On \Cref{question} in the graded case}

In this section, we give an affirmative answer to \Cref{question} for localizations of graded non-$F$-pure normal Gorenstein rings with $F$-rational punctured spectrum.

\begin{theorem}[\Cref{intro:graded}, cf.~\cite{TWY24}*{Corollary~4.19}]\label{graded-case}
Let $S$ be an $F$-finite Noetherian normal $\Z_{\ge 0}$-graded ring of characteristic $p>0$, and set $d := \dim S$.
Assume that $S_0$ is a field, and put $\m := S_+$ and $R := S_\m$.
Assume that $\Spec(R) \setminus \{\m\}$ is $F$-rational and that $R$ is Gorenstein.
If $R$ is not $F$-pure, then $\sht^\infty(R)=\sht^{\reg}(R)$.
\end{theorem}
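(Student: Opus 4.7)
The plan is to establish the non-trivial inequality $\sht^{\reg}(R) \leq n := \sht^\infty(R)$; the reverse is built into the definitions. Throughout, we exploit the grading inherited from $S$: $H^d_\m(R)$ is naturally a graded module bounded above in degree, concentrated in degrees $\leq a(R)$ (the $a$-invariant), with a one-dimensional socle in degree $a(R)$ by the Gorenstein hypothesis.

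I would begin with two general reductions. Since $R$ is Gorenstein local, $H^d_\m(R)$ is the injective hull of $k = R/\m$ as an $R$-module, so any nonzero $R$-submodule contains the socle. The kernel of $\Phi^{e,c}_{R,n}$ is a priori a $W_n(R)$-submodule of $H^d_\m(R)$; however, because the $R$-module structure on $H^d_\m(R)$ factors through the natural surjection $W_n(R) \twoheadrightarrow R$, this kernel is automatically an $R$-submodule. Consequently, for every $c \in R^\circ$ and $e \geq 1$, the injectivity of $\Phi^{e,c}_{R,n}$ is equivalent to its non-vanishing on the one-dimensional socle of $H^d_\m(R)$.

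The next step is a test-element reduction. Because $R$ is $F$-rational and Gorenstein on the punctured spectrum $\Spec R \setminus \{\m\}$, it is strongly $F$-regular there; hence for any homogeneous $c_0 \in \m \cap R^\circ$, the localization $R_{c_0}$ is strongly $F$-regular, and $c_0$ serves as a test element. A standard test-element propagation argument, analogous to the one used in the proof of \cite{TWY24}*{Corollary~4.19}, then shows that if $\Phi^{e_0,c_0}_{R,n}$ is injective for one pair $(e_0,c_0)$ with $c_0$ such a test element, then $R$ is $n$-quasi-$F$-regular. Combined with the previous reduction, it suffices to exhibit a single homogeneous $c_0 \in \m \cap R^\circ$ and a single $e_0 \geq 1$ for which $\Phi^{e_0,c_0}_{R,n}$ does not vanish on the socle of $H^d_\m(R)$.

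For the construction, I would fix a graded lift of Frobenius on the ambient $W(k)$-polynomial ring presenting $S$, so that the pushouts $Q^e_{R,n}$ and $Q^{e,c_0}_{R,n}$ inherit compatible $\Z$-gradings and $\Phi^e_{R,n}$ and $\Phi^{e,c_0}_{R,n}$ become graded. Since $R$ is $n$-quasi-$F^e$-split for every $e$, the image of the socle under $\Phi^e_{R,n}$ is nonzero in a single graded piece of $H^d_{W_n(\m)}(Q^e_{R,n})$. The natural map $Q^e_{R,n} \to Q^{e,c_0}_{R,n}$ is induced by multiplication by $[c_0]$ on $F^e_* W_n(R)$, which is a graded map of degree $(\deg c_0) \cdot p^e$. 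Translating via the Fedder-type criterion \Cref{thm:criterion}(2), one takes a witness $g_e$ for $n$-quasi-$F^e$-splitting (which exists for all $e$ by hypothesis) and verifies that the product $g_e \cdot c_0^{p^n-1}$ continues to satisfy conditions (D1) and (D3) for $e \gg 0$. This yields the required non-vanishing on the socle.

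The main obstacle will be the last verification: ensuring that $g_e c_0^{p^n-1}$ remains outside $(\m^{[p]}, p^n)$ for some $e$, equivalently, that multiplication by $[c_0]$ does not annihilate the image of the socle in the pushout. This requires careful degree bookkeeping combined with two inputs: the $F$-rationality of $\Spec R \setminus \{\m\}$, which prevents cancellations coming from primes other than $\m$, and the non-$F$-pure hypothesis, which localizes the obstruction to quasi-$F$-regularity precisely at the socle degree so that the graded degree shift by $[c_0]$ cannot push the image into a forbidden region. The elliptic-cone example (\cite{KTTWYY3}*{Theorem~A}) shows that without the non-$F$-pure hypothesis this step genuinely fails.
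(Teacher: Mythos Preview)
Your proposal has a genuine gap at the key step, and the overall framing via the Fedder-type criterion is not applicable in this generality.

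First, the framing issue: \Cref{thm:criterion} is stated for hypersurfaces $R/fR$ with $R$ a power series ring; the graded ring $S$ in the theorem is an arbitrary normal Gorenstein graded ring, not assumed to be a hypersurface or even a complete intersection. So there is no ``witness $g_e$'' to manipulate, and the strategy of checking (D1) and (D3) for $g_e c_0^{p^n-1}$ does not get off the ground. Your reductions to the socle and to a single test element are fine, but after that the argument must proceed intrinsically.

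Second, and more substantively: you correctly identify that the whole difficulty is showing that multiplication by $[c_0]$ does not annihilate the image of the socle for $e \gg 0$, but you do not supply any mechanism for this. ``Degree bookkeeping'' only helps once you know that the relevant cohomology vanishes in the degrees that the socle image would have to land in, and for that you need a sign constraint on the $a$-invariant. The paper's proof makes this explicit in two steps. First, it shows $a(S)<0$: assuming $a(S)\ge 0$, one uses the non-$F$-pure hypothesis to force the transition map $\gamma\colon H^d_\m(B^e_{R,1})\to H^d_\m(B^{e+1}_{R,1})$ to vanish in degree $0$, and a diagram chase then contradicts $h$-quasi-$F^{e+1}$-splitting. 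This is exactly where non-$F$-purity enters, not merely as a way to ``localize the obstruction at the socle degree.'' Second, with $a(S)<0$ in hand, the paper works with the quasi-tight closure modules $\widetilde{0^*_m}\subset H^d_\m(W_m(R))$ and the characterization $\sht^{\reg}(R)\le h \Leftrightarrow 0^*_h=0$ from \cite{KTTWYY3}. One computes the minimal degree $t_m$ of $\widetilde{0^*_m}$ inductively as $t_m=p^{m-1}t_1$, and since $t_1<0$ (from $a(S)<0$ and punctured $F$-rationality) one can choose $e$ with $p^{h-1}t_1>p^e a(S)$; then any $\alpha\in 0^*_h$ lifts to $\alpha_h\in\widetilde{0^*_h}$ with $F^e(\alpha_h)=0$ by degree reasons, whence $\alpha=0$ by $h$-quasi-$F^e$-splitting. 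Your outline contains neither the $a$-invariant step nor the quasi-tight closure machinery that replaces the unavailable Fedder witnesses.
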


\begin{proof}
For each integer $n \geq 1$ and each $W_n(R)$-module $M$, we write
\[
H^d_\m(M):=H^d_{W_n(\m)}(M).
\]
For integers $e,n \ge 1$, we set $B^{e}_{R,n} := \Coker(R \to Q^{e}_{R,n})$, and define $B^e_{S,n}$ in the same way.
Since $S$ is graded, the rings $W_n(S)$ and the $W_n(S)$-modules $Q^e_{S,n}$ and $B^e_{S,n}$ carry natural graded structures for all $e,n \geq 1$; see \cite{KTTWYY2}*{Section~7}.
Therefore, $H^i_\m(W_n(R))$, $H^i_\m(Q^e_{R,n})$, and $H^i_\m(B^e_{R,n})$ inherit natural graded structures for all $i,e,n \geq 1$.
We may assume that $h:=\sht^\infty(R) < \infty$.
Set
\[
a(S) := \max\{m \in \Z \mid H^d_\m(S)_m \ne 0\}.
\]

We first show that $a(S) < 0$.
Suppose to the contrary that $a(S) \ge 0$.
From the exact sequence
\[
F_*H^d_\m(W_{n-1}(R)) \xrightarrow{V} H^d_\m(W_n(R)) \xrightarrow{\Res} H^d_\m(R) \to 0,
\]
we obtain $H^d_\m(W_h(R))_m = 0$ for all $m > p^h a(S)$.
In particular, choosing $e \ge 1$ such that $p^e > p^h a(S)$, the homomorphism
\[
H^d_\m(W_h(R))_m \xrightarrow{F^e} F^e_*H^d_\m(W_h(R))_{p^e m}
\]
is zero for all $m \ge 1$.
Since $R$ is $h$-quasi-$F^e$-split, the restriction map
\[
\Res \colon H^d_\m(W_h(R))_m \to H^d_\m(R)_m
\]
is zero for all $m \ge 1$.
As the restriction map map
\[
\Res \colon H^d_\m(W_n(R)) \to H^d_\m(R)
\]
is surjective, we conclude that $H^d_\m(R)_m = 0$ for all $m \ge 1$, and hence $a(S)=0$.

Since $R$ is not $F$-pure, we have $h \ge 2$.
As $\sht^\infty(R)=h$, there exists a positive integer $e$ such that $R$ is not $(h-1)$-quasi-$F^e$-split.
Choose a non-zero element $\eta \in H^d_\m(R)$ contained in the socle; then $\deg(\eta)=0$.

For each positive integer $e'$, we have a commutative diagram with exact rows:
\begin{equation}\label{eq:exact:e'}
\begin{tikzcd}
    H^{d-1}_\m(B^{e'}_{R,h}) \arrow[r,"\alpha^{e'}_{h}"] \arrow[d,"\Res"] &
    H^{d}_\m(R) \arrow[r,"\Phi^{e'}_{R,h}"] \arrow[d,equal] &
    H^{d}_\m(Q^{e'}_{R,h}) \arrow[d] 
    \\
    H^{d-1}_\m(B^{e'}_{R,h-1}) \arrow[r,"\alpha^{e'}_{h-1}"] &
    H^{d}_\m(R) \arrow[r,"\Phi^{e'}_{R,h-1}"] &
    H^{d}_\m(Q^{e'}_{R,h-1})
\end{tikzcd}
\end{equation}

For $e'=e$, since $R$ is not $(h-1)$-quasi-$F^e$-split, there exists a homogeneous element $\tau \in H^{d-1}_\m(B^{e}_{R,h-1})$ of degree $0$ such that $\alpha^e_{h-1}(\tau)=\eta$.

We also have the following commutative diagram with exact rows:
\[
\begin{tikzcd}
    H^{d-1}_\m(B^e_{R,h}) \arrow[r] \arrow[d] &
    H^{d-1}_\m(B^e_{R,h-1}) \arrow[r] \arrow[d,"\beta"] &
    F^{h-1}_*H^{d}_\m(B^e_{R,1}) \arrow[d,"\gamma"]
    \\
    H^{d-1}_\m(B^{e+1}_{R,h}) \arrow[r,"\Res"] &
    H^{d-1}_\m(B^{e+1}_{R,h-1}) \arrow[r] &
    F^{h-1}_*H^{d}_\m(B^{e+1}_{R,1})
\end{tikzcd}
\]

To show that $\beta(\tau) \in \Im(\Res)$, it suffices to prove that $\gamma$ is zero in degree $0$.
Consider the commutative diagram
\begin{equation}\label{eq:exact2}
\begin{tikzcd}
    F^e_*H^d_\m(R) \arrow[r,twoheadrightarrow] \arrow[d,"F"] &
    H^d_\m(B^e_{R,1}) \arrow[d,"\gamma"] \\
    F^{e+1}_*H^d_\m(R) \arrow[r,twoheadrightarrow]  &
    H^d_\m(B^{e+1}_{R,1})
\end{tikzcd}
\end{equation}
Since $R$ is Gorenstein and not $F$-pure, the left vertical map in \eqref{eq:exact2} is zero in degree $0$.
As the horizontal maps are surjective, it follows that $\gamma$ is zero in degree $0$.

Hence there exists $\tau' \in H^{d-1}_\m(B^{e+1}_{R,h})$ such that $\beta(\tau)=\Res(\tau')$.
In particular,
\[
\alpha^{e+1}_h(\tau') = \Res \circ \alpha^{e+1}_{h-1}(\tau) = \eta
\]
by \eqref{eq:exact:e'}.
Thus $\Phi^{e+1}_{R,h}(\tau')=0$, contradicting the fact that $R$ is $h$-quasi-$F^{e+1}$-split.
Therefore, we conclude that $a(S)<0$.

We next show that $\sht^{\infty}(R)=h=\sht^{\reg}(R)$.
Since $h \le \sht^{\reg}(R)$ is clear, it suffices to prove the opposite inequality.
For each $m \ge 1$, set
\[
t_m := \inf\{\,l \in \Z \mid (\widetilde{0^*_m})_l \ne 0\,\}.
\]
Since $a(S)<0$ and $\Spec(R)\setminus\{\m\}$ is $F$-rational, we have $-\infty < t_1 < 0$.
We claim that $t_m = p^{m-1} t_1$ for all $m \ge 1$.
The case $m=1$ is clear.
Assume the claim holds for $m-1$.
By \cite{KTTWYY3}*{Proposition~3.20(2)}, we have
\[
V^{-(m-1)}(\widetilde{0^*_m}) = F^{m-1}_* 0^*,
\]
where
\[
V^{m-1} \colon F^{m-1}_*H^d_\m(R) \longrightarrow H^d_\m(W_m(R)).
\]
Since $R$ is Cohen--Macaulay, the map $V^{m-1}$ is injective, and hence $t_m \ge p^{m-1} t_1$.

Now suppose that $\alpha \in \widetilde{0^*_m}$ is homogeneous of degree $l < p^{m-1}t_1$.
Then, by \cite{KTTWYY3}*{Proposition~3.20},
\[
\Res(\alpha) \in (\widetilde{0^*_{m-1}})_l = 0,
\]
because $t_{m-1} > l$.
Thus there exists $\beta \in H^d_\m(R)$ with $V^{m-1}(\beta)=\alpha$.
Since $\beta$ has degree $l/p^{m-1} < t_1$, we have $\beta=0$, and hence $\alpha=0$.
Therefore, $t_m = p^{m-1} t_1$.

Now suppose that $0^*_h \ne 0$, and choose a homogeneous element $\alpha \in (0^*_h)_s$.
Then $a(S) \ge s$.
Choose $e \ge 1$ such that $p^{h-1} t_1 > p^e a(S)$.
By \cite{KTTWYY3}*{Theorem~3.25}, choose a lift $\alpha_h \in H^d_\m(W_h(R))$ of $\alpha$ with $\alpha_h \in \widetilde{0^*_h}$.
By \cite{KTTWYY3}*{Proposition~3.23}, the element $F^e(\alpha_h)$ is homogeneous of degree $p^e s \le p^e a(S)$.
Since $t_h = p^{h-1} t_1 > p^e a(S)$, we have $F^e(\alpha_h)=0$.
As $R$ is $h$-quasi-$F^e$-split, this implies $\alpha=0$, a contradiction.

Hence $0^*_h = 0$, completing the proof.
\end{proof}

\bibliographystyle{skalpha}
\bibliography{bibliography.bib}

@article{TWY24,
title={Quasi-${F^e}$-splittings and quasi-${F}$-regularity},
author={Hiromu Tanaka and Jakub Witaszek and Fuetaro Yobuko},
journal={arXiv:2404.06788},
year={2024},
}

@article{kty,
  title={Fedder type criteria for quasi-${F}$-splitting {I}},
  author={Kawakami, Tatsuro and Takamatsu, Teppei and Yoshikawa, Shou},
  journal={arXiv preprint arXiv:2204.10076},
  year={2022, to appear in Amer. J. Math}
}

@article{kty2,
  title={Fedder type criteria for quasi-$ {F} $-splitting {II}},
  author={Kawakami, Tatsuro and Takamatsu, Teppei and Yoshikawa, Shou},
  journal={arXiv preprint arXiv:2511.17270},
  year={2025}
}

@article{KTTWYY3,
  title={Quasi-${F}$-splittings in birational geometry {III}},
  author={Kawakami, Tatsuro and Takamatsu, Teppei and Tanaka, Hiromu and Witaszek, Jakub and Yobuko, Fuetaro and Yoshikawa, Shou},
  journal={arXiv preprint arXiv:2408.01921},
  year={2024}
}

@article {Yobuko19,
    AUTHOR = {Yobuko, Fuetaro},
     TITLE = {Quasi-{F}robenius splitting and lifting of {C}alabi-{Y}au
              varieties in characteristic {$p$}},
   JOURNAL = {Math. Z.},
  FJOURNAL = {Mathematische Zeitschrift},
    VOLUME = {292},
      YEAR = {2019},
    NUMBER = {1-2},
     PAGES = {307--316},
      ISSN = {0025-5874},
   MRCLASS = {14J32 (13F35 14G17)},
  MRNUMBER = {3968903},
MRREVIEWER = {Tyler L. Kelly},
       DOI = {10.1007/s00209-018-2198-7},
       URL = {https://doi.org/10.1007/s00209-018-2198-7},
}

@article{Yoshikawa25,
  title        = {Computation method for perfectoid purity and perfectoid {BCM\textendash{}regularity}},
  author       = {Shou Yoshikawa},
  journal      = {arXiv preprint arXiv:2502.06108},
  doi          = {10.48550/arXiv.2502.06108},
  year         = {2025},
}

@article {KTTWYY2,
    AUTHOR = {Kawakami, Tatsuro and Takamatsu, Teppei and Tanaka, Hiromu and
              Witaszek, Jakub and Yobuko, Fuetaro and Yoshikawa, Shou},
     TITLE = {Quasi-{$F$}-splittings in birational geometry {II}},
   JOURNAL = {Proc. Lond. Math. Soc. (3)},
  FJOURNAL = {Proceedings of the London Mathematical Society. Third Series},
    VOLUME = {128},
      YEAR = {2024},
    NUMBER = {4},
     PAGES = {Paper No. e12593, 81},
      ISSN = {0024-6115,1460-244X},
   MRCLASS = {14E30 (13A35 14G17)},
  MRNUMBER = {4731853},
MRREVIEWER = {Karl\ Schwede},
       DOI = {10.1112/plms.12593},
       URL = {https://doi-org.utokyo.idm.oclc.org/10.1112/plms.12593},
}

@article{Yoshikawa25-fedder,
  author       = {Shou Yoshikawa},
  title        = {Fedder-type criterion for quasi-{$F^e$}-splitting and quasi-{$F$}-regularity},
  journal      = {arXiv preprint},
  year         = {2025},
  url          = {https://arxiv.org/abs/2505.09015}
}

@incollection{LMM,
  author    = {Christian Liedtke and Gebhard Martin and Yuya Matsumoto},
  title     = {Torsors over the {Rational} {Double} {Points} in {Characteristic} $p$},
  booktitle = {Isolated Quotient Singularities in Positive Characteristic},
  series    = {Ast\'erisque},
  number    = {461},
  year      = {2025},
  pages     = {75--152},
  publisher = {Soci\'et\'e Math\'ematique de France},
  doi       = {10.24033/ast.1250},
  isbn      = {978-2-37905-220-0},
  issn      = {0303-1179}
}

@article {HaraRDP,
    AUTHOR = {Hara, Nobuo},
     TITLE = {Classification of two-dimensional {$F$}-regular and {$F$}-pure
              singularities},
   JOURNAL = {Adv. Math.},
  FJOURNAL = {Advances in Mathematics},
    VOLUME = {133},
      YEAR = {1998},
    NUMBER = {1},
     PAGES = {33--53},
      ISSN = {0001-8708,1090-2082},
   MRCLASS = {14J17 (14B05)},
  MRNUMBER = {1492785},
MRREVIEWER = {Karen\ E.\ Smith},
       DOI = {10.1006/aima.1997.1682},
       URL = {https://doi-org.kyoto-u.idm.oclc.org/10.1006/aima.1997.1682},
}
\end{document}